\pdfoutput=1
\documentclass[11pt,letterpaper]{article}
\usepackage[scale=0.78]{geometry}
\usepackage[T1]{fontenc}
\usepackage{lmodern}
\usepackage[utf8]{inputenc}
\usepackage{amsmath,amssymb,amsthm,mathtools}
\usepackage{graphicx}
\usepackage{booktabs}
\usepackage{caption}
\usepackage{xcolor}
\usepackage[colorlinks,linkcolor=blue!50!black,citecolor=blue!50!black,urlcolor=blue!50!black]{hyperref}
\hypersetup{
  pdftitle={Integrable building blocks for symplectic neural networks: learned-type layers, fiber-preserving layers, and what a single layer can do},
  pdfauthor={Andrew Qing He},
}

\newtheorem{lemma}{Lemma}
\newtheorem{proposition}{Proposition}
\newtheorem{corollary}{Corollary}
\theoremstyle{definition}

\newtheorem{example}{Example}
\theoremstyle{remark}
\newtheorem{remark}{Remark}

\newcommand{\R}{\mathbb{R}}
\newcommand{\dd}{\mathrm{d}}
\newcommand{\T}{\top}
\DeclareMathOperator{\Sp}{Sp}

\title{Integrable building blocks for symplectic neural networks:\\
learned-type layers, fiber-preserving layers, and what a single layer can do}
\author{Andrew Qing He\\[3pt]
 \normalsize Duke University\\
 \normalsize\texttt{andrew.hq@duke.edu}}
\date{}

\begin{document}
\maketitle

\begin{abstract}
Exactly symplectic neural networks are built almost exclusively from shear maps, and gain
expressivity only by stacking more of them. We ask what else a single explicit symplectic
layer can be. Starting from the observation that every layer in use is the exact flow of a
Hamiltonian that is a function of commuting integrals, we obtain three families of layers:
a \emph{learned-type layer}, in which every conjugate pair is rotated, squeezed or sheared by
a state-dependent amount and the type is found by training; \emph{moving-plane shears}, which
are residual symplectic maps outside the known ridge-function class; and \emph{fiber-preserving
layers}, which combine an unconstrained point transformation with a fiber translation at the
cost of one linear solve. We characterize residual symplectic layers $x\mapsto x+J\nabla H(x)$:
in one degree of freedom they are exactly the shears along a line, while from two degrees of
freedom on a strictly larger class exists. Conjugating a learned-type layer by a symplectic
network yields an exact one-parameter group with an exactly conserved learned Hamiltonian.
In experiments on sixteen systems with one to forty degrees of freedom, against SympNets,
H\'enonNets and learned-plane shear networks, layers built on nonlinear integrals are far
more accurate when the Hamiltonian contains functions of those integrals. On nonlinear
Schr\"odinger lattices with six to forty sites, twist layers alternating with linear
symplectic maps are more than a hundred times more accurate than every shear-based network;
with an on-site potential of learned form they are three to four orders of magnitude more
accurate, and also more accurate than a split-step network with the correct fixed
nonlinearity. On chains of Tao oscillators with the step size as an input, stretch layers
are forty-five to seventy times more accurate than learned-plane shears and recover the
Hamiltonian to one percent. Conjugated flows remain accurate at thirty times the training
horizon in one degree of freedom. On generic non-separable systems the gains are small:
fiber layers improve on learned-plane shears by at most a factor of two, and widening the
networks does not help any architecture.
\end{abstract}

\medskip
\noindent\textbf{Keywords.} symplectic neural networks; structure-preserving machine learning;
Hamiltonian systems; integrable systems; splitting methods; nonlinear Schr\"odinger lattices.

\noindent\textbf{MSC 2020.} 37J11, 65P10, 68T07, 70H15.

\section{Introduction}

Let $x=(q,p)\in\R^{2n}$ and $J=\begin{psmallmatrix}0&I\\-I&0\end{psmallmatrix}$. A map $\varphi$
is symplectic if $(\dd\varphi)^\T J\,\dd\varphi=J$; flows of Hamiltonian systems
$\dot x=J\nabla H(x)$ are symplectic. Networks that are symplectic by construction
\cite{jin2020sympnets,burby2021henonnets,tapley2024symplectic} learn such flow maps from data
without a penalty term and retain the qualitative long-time behavior of the dynamics.

Nearly all of these networks are compositions of \emph{shears},
\begin{equation}\label{eq:shears}
 (q,p)\mapsto(q,\,p+\nabla F(q)),\qquad (q,p)\mapsto(q+\nabla G(p),\,p),
\end{equation}
or of shears along other Lagrangian planes \cite{feng1998variations,tapley2024symplectic}.
A single shear moves only half of the variables, and, as we confirm experimentally
(Section~\ref{sec:experiments}), enlarging the scalar networks $F,G$ at fixed depth does not
improve accuracy: these architectures gain expressivity through depth alone. This paper asks
what a single explicit symplectic layer can be, beyond a shear.

The starting point of this work was the \emph{stretch} map of \cite{he2024deep},
\begin{equation}\label{eq:stretch}
 Q=e^{\nabla\Phi(q\odot p)}\odot q,\qquad P=e^{-\nabla\Phi(q\odot p)}\odot p,
\end{equation}
obtained as a symplectic counterpart of the scaling in real NVP coupling layers
\cite{dinh2017density}. As a map it is classical: for $n=1$ it is Moser's normal form of an
area-preserving map at a hyperbolic fixed point \cite{moser1956analytic}, and it is the exact
flow of the Hamiltonian $\Phi(q_1p_1,\dots,q_np_n)$. That observation is the organizing
principle of the paper.

\paragraph{Contributions.}
\begin{enumerate}
 \item \emph{A calculus} (Section~\ref{sec:calculus}). Shears, the stretch map and the twist
 map are flows of $\Phi(I_1,\dots,I_k)$ for commuting integrals $I_j$ with explicit flows.
 We use this classical fact, together with nested integrals and cotangent lifts, to derive
 new layers and to locate existing ones.
 \item \emph{Learned-type layers} (Section~\ref{sec:quad}): trainable quadratic integrals per
 conjugate pair; the layer is a state-dependent rotation, squeeze or shear, with the type
 learned.
 \item \emph{Residual symplectic layers} (Section~\ref{sec:residual}): $x\mapsto x+J\nabla H(x)$
 is symplectic iff $H''JH''\equiv0$. For $n=1$ these are exactly the ridge functions; for
 $n\ge2$ we give solutions that are not functions of commuting linear forms, and the
 corresponding \emph{moving-plane} layer.
 \item \emph{Fiber-preserving layers} (Section~\ref{sec:fiber}): $Q=f(q)$,
 $P=Df(q)^{-\T}(p+\nabla F(q))$ with $f$ an unconstrained residual network. One such layer and
 its dual represent a generic linear symplectic map exactly, where shears need four or five
 factors.
 \item \emph{Conjugated integrable flows and step-size-aware networks}
 (Section~\ref{sec:time}).
 \item \emph{Experiments} (Section~\ref{sec:experiments}) on sixteen systems, against SympNets,
 H\'enonNets and learned-plane shear networks, reporting also where the new layers lose. The
 main findings: twist and stretch layers are specialists that win by one to two orders of
 magnitude on Hamiltonians containing their integral, up to forty degrees of freedom; the
 learned-type layer is close to the better specialist in each case without being told the
 integral; alternating twist layers with linear symplectic maps gives an exactly symplectic
 learned split-step scheme, whose accuracy depends strongly on how the potential is
 parametrized (general, on-site of learned form, or fixed form); and widening the potentials
 does not help any architecture.
\end{enumerate}

\paragraph{Related work.}
SympNets \cite{jin2020sympnets} compose linear, activation and gradient modules, all shears,
and are universal approximators of symplectic maps; H\'enonNets \cite{burby2021henonnets} use
H\'enon maps and rely on \cite{turaev2003polynomial}. Tapley \cite{tapley2024symplectic}
builds layers as exact flows of ridge Hamiltonians $N(Ap+Bq)$ with $AB^\T$ symmetric, i.e.\
shears along learned Lagrangian planes, following \cite{feng1998variations}; SympFlow
\cite{canizares2024symplectic} uses flows of time-dependent separable Hamiltonians. Layers as
exact flows of simple Hamiltonians are thus an established design principle
\cite{tapley2024symplectic,canizares2024symplectic,eldred2023lie}, which we do not claim; what
we add is the use of functions of commuting \emph{nonlinear} integrals.
Point transformations $Q=f(q)$, $P=Df^{-\T}p$ were introduced as symplectic layers with
real NVP $f$ by Li et al.\ \cite{li2020neural}; an unconstrained network with an explicitly
inverted Jacobian was used on angle variables in \cite{ibata2021actionfinder}.
Rotations of a latent pair by an action-dependent angle appear as the time-evolution step of
Koopman and action-angle models
\cite{lusch2018deep,bondesan2019learning,ishikawa2021neural,daigavane2022learning}; there the
integral is fixed and the map is used once, in latent space.
Janik and Benner \cite{janik2025time} show that step-size-scaled $q$/$p$-shear networks have
separable generators and leave non-separable systems open.
Implicit generating-function networks \cite{chen2021data} and Hamiltonian neural networks
\cite{greydanus2019hamiltonian} are outside our scope: we consider only explicit maps.

\section{A calculus of explicit symplectic layers}\label{sec:calculus}

\subsection{Functions of commuting integrals}

\begin{lemma}[classical; see e.g.\ {\cite[\S49]{arnold1989mathematical}}]\label{lem:commuting}
Let $I_1,\dots,I_k\in C^2(\R^{2n})$ Poisson-commute and have complete flows $\varphi^j_t$. For
$\Phi\in C^2(\R^k)$ the flow of $H=\Phi(I_1,\dots,I_k)$ is
\[
 \varphi^H_t(x)=\varphi^1_{t\,\theta_1(x)}\circ\dots\circ\varphi^k_{t\,\theta_k(x)}(x),
 \qquad \theta_j(x)=\frac{\partial\Phi}{\partial I_j}\bigl(I(x)\bigr).
\]
\end{lemma}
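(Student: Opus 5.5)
The plan is to show that the right-hand side, call it $\psi_t(x)$, satisfies $\psi_0=\mathrm{id}$ and $\partial_t\psi_t=J\nabla H(\psi_t)$; uniqueness of solutions of the $C^1$ vector field $J\nabla H$ then gives $\psi_t=\varphi^H_t$. The argument has three steps.

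\emph{Step 1: the integrals are invariant under every flow.} Since $\{I_i,I_j\}=0$, each $I_i$ is constant along each $\varphi^j_s$. Consequently $I(\varphi^j_s(x))=I(x)$, and so $\theta_i(\varphi^j_s(x))=\theta_i(x)$ for all $i,j,s$. Iterating, $I(\psi_t(x))=I(x)$, and the coefficients $\theta_j$ evaluated along $\psi_t(x)$ are the constants $\theta_j(x)$. The same invariance holds along $\varphi^H_t$, because $\{I_i,H\}=\sum_j\partial_j\Phi\,\{I_i,I_j\}=0$.

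\emph{Step 2: the flows commute.} Commuting Poisson brackets give commuting Hamiltonian vector fields, since $[X_{I_i},X_{I_j}]=\pm X_{\{I_i,I_j\}}=0$. With complete flows this yields $\varphi^i_s\circ\varphi^j_r=\varphi^j_r\circ\varphi^i_s$, and one can quote this standard fact. Hence, for fixed $x$, the map $(s_1,\dots,s_k)\mapsto\varphi^1_{s_1}\circ\dots\circ\varphi^k_{s_k}(x)$ is an $\R^k$-action. Its derivative in $s_j$ at any point $y$ of the orbit is $X_{I_j}(y)$: we move $\varphi^j$ to the front by commutativity and then differentiate.

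\emph{Step 3: differentiate in $t$.} Write $\psi_t(x)=\Psi(t\theta(x))$, where $\Psi(s)=\varphi^1_{s_1}\circ\dots\circ\varphi^k_{s_k}(x)$ and $\theta(x)$ is fixed in $t$. The chain rule and Step 2 give
\[
\partial_t\psi_t(x)=\sum_j\theta_j(x)\,J\nabla I_j(\psi_t(x)).
\]
By Step 1, $\theta_j(x)=\partial_j\Phi(I(\psi_t(x)))$. The sum therefore equals $J\sum_j\partial_j\Phi(I)\nabla I_j=J\nabla H$ at $\psi_t(x)$, and $\psi_0(x)=x$.

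The main obstacle is regularity. The vector field $J\nabla H$ is only $C^1$, which is enough for uniqueness. The commutation of flows for $C^2$ integrals, which have $C^1$ vector fields, also needs care: $[X_i,X_j]=0$ must be interpreted for $C^1$ fields. I would handle it directly. Set $u(r)=\varphi^i_s\circ\varphi^j_r(y)$ and $v(r)=\varphi^j_r\circ\varphi^i_s(y)$. One shows that $(\varphi^i_s)_*X_{I_j}=X_{I_j}$, because $\varphi^i_s$ is symplectic and preserves $I_j$: the pushforward of $X_{I_j}$ by a symplectomorphism is the Hamiltonian vector field of $I_j\circ(\varphi^i_s)^{-1}=I_j$. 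Then both $u$ and $v$ solve the ODE of $X_{I_j}$ with the same initial point, and uniqueness finishes the argument. This route avoids Lie brackets entirely.

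The differentiability of $\Psi$ in $s$ also needs a remark. Each factor is differentiable in its own time, and the factors are $C^1$ in space, so $\Psi$ is $C^1$ and the chain rule applies.
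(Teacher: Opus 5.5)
Your proof is correct and follows the same route as the paper's proof. Both use $X_H=\sum_j\theta_jX_{I_j}$, the invariance of every $I_i$ (hence every $\theta_j$) under all the flows, and the composition of the commuting flows with frozen times $t\theta_j(x)$; you make explicit what the paper leaves implicit, namely differentiating the composition and invoking uniqueness, and proving commutativity via $(\varphi^i_s)_*X_{I_j}=X_{I_j\circ(\varphi^i_s)^{-1}}=X_{I_j}$, which is sound at $C^2$ regularity.
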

\begin{proof}
$X_H=\sum_j\theta_j X_{I_j}$, and $\{I_i,H\}=\sum_j\theta_j\{I_i,I_j\}=0$, so every $I_i$, hence
every $\theta_j$, is constant along the flow of $H$ and along each $\varphi^j$. The commuting
flows therefore compose with the frozen times $t\theta_j(x)$.
\end{proof}

A \emph{layer} is the map $\varphi^H_1$ with $\Phi$ a scalar network. It is symplectic, and
its inverse is the layer with $\Phi$ replaced by $-\Phi$, since the $I_j$ are invariant.
Evaluating it costs one gradient of $\Phi$ and the explicit flows of the $I_j$.

\begin{example}
(i) $I=q$: the $q$-shear in \eqref{eq:shears}, with $\Phi=-F$. (ii) $I=p$: the $p$-shear.
(iii) $I=Cx$ with $CJC^\T=0$: shears along a Lagrangian plane
\cite{feng1998variations,tapley2024symplectic}. (iv) $I_j=q_jp_j$: the stretch map
\eqref{eq:stretch}. (v) $I_j=\tfrac12(q_j^2+p_j^2)$: the twist map, rotating the $j$-th pair
by the angle $\partial\Phi/\partial I_j$. (vi) For $n\ge2$, $I_1=q_1p_1+q_2p_2$ and
$I_2=q_1p_2-q_2p_1$: a joint dilation and rotation (focus--focus type).
\end{example}

Cases (iv)--(vi) are, up to linear symplectic changes of variables, the nondegenerate types
in Williamson's classification of quadratic Hamiltonians \cite{williamson1936algebraic}.

\subsection{The learned-type layer}\label{sec:quad}

Write $x_j=(q_j,p_j)$ and give every pair a trainable quadratic-affine integral
\begin{equation}\label{eq:quad-invariant}
 I_j(x_j)=\tfrac12 x_j^\T M_jx_j+c_j^\T x_j,\qquad
 M_j=\begin{pmatrix}a_j&b_j\\ b_j&d_j\end{pmatrix},\quad c_j\in\R^2 .
\end{equation}
Integrals of different pairs commute, so Lemma~\ref{lem:commuting} applies with $k=n$. The
flow of $I_j$ solves $\dot x_j=A_jx_j+J_2c_j$ with $A_j=J_2M_j$, and $A_j^2=-\Delta_jI$,
$\Delta_j=\det M_j$. With the entire functions
\[
 C(u)=\cos\sqrt u,\qquad S(u)=\frac{\sin\sqrt u}{\sqrt u},\qquad R(u)=\frac{1-\cos\sqrt u}{u}
\]
(continued to $u<0$ by $\cosh$, $\sinh$), the layer is
\begin{equation}\label{eq:quad-layer}
 x_j\mapsto\bigl(C(u_j)I+\theta_jS(u_j)A_j\bigr)x_j+\bigl(\theta_jS(u_j)I+\theta_j^2R(u_j)A_j\bigr)J_2c_j,
 \qquad u_j=\Delta_j\theta_j^2,\quad \theta=\nabla\Phi(I(x)).
\end{equation}
It rotates the pair if $\Delta_j>0$, squeezes it hyperbolically if $\Delta_j<0$ and shears it
if $\Delta_j=0$, about a center determined by $c_j$, and depends smoothly on $(M_j,c_j)$
across the three cases. The parameters $M_j,c_j$ are trained together with $\Phi$, so the
type of each degree of freedom is not fixed in advance. The stretch map is
$M_j=\begin{psmallmatrix}0&1\\1&0\end{psmallmatrix}$, $c_j=0$; the twist map is $M_j=I$; a
shear in $q_j$ alone is $M_j=0$, $c_j=e_1$. Pairs interact through $\Phi$ only.

\subsection{Nested integrals and moving-plane shears}\label{sec:moving}

The integrals in Lemma~\ref{lem:commuting} may depend on one another. Split the degrees of
freedom into two groups $A$ and $B$ and put $a=q_A$, $y_B=(q_B,p_B)$. For a symmetric
matrix-valued network $S(a)$ the linear forms
\[
 u=q_B+S(a)\,p_B
\]
span, for each fixed $a$, a Lagrangian plane of the $B$-block; the components of $a$ and $u$
commute pairwise. The flow of $H=\Phi(a,u)$ is
\begin{equation}\label{eq:moving}
\begin{aligned}
 q_B&\mapsto q_B+t\,S(a)\Phi_u, \qquad p_B\mapsto p_B-t\,\Phi_u,\\
 p_A&\mapsto p_A-t\,\nabla_a\bigl[\Phi(a,q_B+S(a)p_B)\bigr]
      +\tfrac{t^2}{2}\,\nabla_a\bigl[\phi^\T S(a)\phi\bigr]_{\phi=\Phi_u},
\end{aligned}
\end{equation}
with $q_A$ unchanged; the last gradient is taken with $\phi$ held fixed. We call this a
\emph{moving-plane shear}: block $B$ is sheared along a plane that turns with $q_A$, and
$p_A$ is shifted to compensate. This is the symplectic form of conditioning in a coupling
layer \cite{dinh2017density}: any family of symplectic maps of block $B$ parametrized by
$q_A$ extends to a symplectic map of the whole space by a shift of $p_A$.
Because the plane moves, $H$ is not a function of commuting linear forms
(Section~\ref{sec:residual}).

\section{Residual symplectic layers}\label{sec:residual}

The simplest way to add many branches in parallel is a residual update $x\mapsto x+\tau J\nabla H(x)$.

\begin{proposition}\label{prop:residual}
Let $H\in C^2(\R^{2n})$. The map $x\mapsto x+\tau J\nabla H(x)$ is symplectic for all $\tau$
(equivalently, for one $\tau\ne0$) if and only if
\begin{equation}\label{eq:nilpotent}
 H''(x)\,J\,H''(x)=0\qquad\text{for all }x,
\end{equation}
that is, iff the image of the Hessian is everywhere isotropic.
\end{proposition}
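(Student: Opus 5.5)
The plan is a direct computation of the symplectic condition for $\varphi_\tau(x)=x+\tau J\nabla H(x)$, followed by a polynomial-in-$\tau$ argument. The Jacobian is $\dd\varphi_\tau=I+\tau JH''$, where $H''$ is symmetric. Using $J^\T=-J$ and $J^\T J=I$, I will expand
\[
 (I+\tau JH'')^\T J(I+\tau JH'')
 =J+\tau\bigl(H''J^\T J+JJH''\bigr)+\tau^2H''J^\T JJH''
 =J+\tau\bigl(H''-H''\bigr)+\tau^2H''JH'' .
\]
The first-order term cancels because $J^\T J=I$ and $JJ=-I$. The second-order term simplifies because $J^\T J J=J$. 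This leaves
\[
 (\dd\varphi_\tau)^\T J\,\dd\varphi_\tau=J+\tau^2\,H''(x)JH''(x).
\]
The first-order cancellation is precisely the statement that $JH''$ is a Hamiltonian matrix, which is why only the quadratic term survives.

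From this identity both equivalences follow at once. The map is symplectic for a given $\tau\neq0$ if and only if $\tau^2H''JH''\equiv0$, that is, if and only if \eqref{eq:nilpotent} holds. That condition does not depend on $\tau$. So "for all $\tau$" and "for one $\tau\neq0$" are equivalent to each other and to \eqref{eq:nilpotent}. The trivial case $\tau=0$ imposes nothing and is covered by "for all".

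For the geometric reformulation, write $K=H''(x)$, which is symmetric. For vectors $u=Ka$ and $v=Kb$ in the image of $K$, the symplectic pairing is $\omega(u,v)=u^\T Jv=a^\T KJKb$. Hence $KJK=0$ if and only if $\omega$ vanishes on $\operatorname{im}K\times\operatorname{im}K$, i.e.\ the image is isotropic. The forward direction is immediate. The converse uses that $a,b$ range over all of $\R^{2n}$.

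There is no real obstacle. The only points needing care are the sign conventions in the cancellation, namely $J^\T=-J$ and $J^2=-I$, and noting that $C^2$ regularity is exactly what makes $H''$ symmetric and continuous so the pointwise identity makes sense.
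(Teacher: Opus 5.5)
Your proof is correct and is the paper's argument: expand $(I+\tau JH'')^\T J(I+\tau JH'')$ to $J+\tau^2H''JH''$ using $J^\T J=I$ and $J^2=-I$, and read off both equivalences. The paper leaves the isotropy reformulation implicit; your check that $KJK=0$ for symmetric $K$ is exactly the vanishing of $\omega$ on $\operatorname{im}K$ fills that in.
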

\begin{proof}
$(I+\tau JH'')^\T J(I+\tau JH'')=J+\tau(H''J^\T J+J^2H'')+\tau^2H''J^\T JJH''=J+\tau^2H''JH''$,
using $J^\T J=I$ and $J^2=-I$.
\end{proof}

Condition \eqref{eq:nilpotent} holds for $H=\phi(Cx)+b^\T x$ with $CJC^\T=0$
\cite{feng1998variations}: these are the shears along Lagrangian planes.

\begin{corollary}[one degree of freedom]\label{cor:n1}
For $n=1$, \eqref{eq:nilpotent} is $\det H''\equiv0$, and a $C^2$ solution on all of $\R^2$ has
the form $H(x)=g(a^\T x)+b^\T x+c$. Hence a globally defined residual symplectic layer in one
degree of freedom is a shear along a fixed line, composed with a translation.
\end{corollary}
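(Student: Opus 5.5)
The plan has two parts: reduce \eqref{eq:nilpotent} to a Monge--Amp\`ere equation, then classify its global solutions. For the first part, note that any symmetric $2\times2$ matrix $S$ satisfies $SJS=(\det S)\,J$. This can be checked directly, or from $S^\T JS=(\det S)J$, valid for every $2\times2$ matrix. With $S=H''(x)$, condition \eqref{eq:nilpotent} of Proposition~\ref{prop:residual} is therefore exactly $\det H''\equiv0$. The converse direction is immediate: for $H=g(a^\T x)+b^\T x+c$ we have $H''=g''(a^\T x)\,aa^\T$, which has rank at most one, so by Proposition~\ref{prop:residual} the layer is $x\mapsto x+\tau g'(a^\T x)Ja+\tau Jb$. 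This is a shear along the line $a^\T x=\text{const}$ composed with a translation.

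The substance is the global classification of $C^2$ solutions of $\det H''=0$ on $\R^2$. This is the Hartman--Nirenberg theorem on developable graphs, and I would either cite it or reproduce its argument as follows. Let $U=\{H''\ne0\}$, an open set on which $H''$ has rank exactly one. There $H''(x)=\lambda(x)w(x)w(x)^\T$, and the kernel is the line field $w^\perp$. Along any integral curve $\gamma$ of $w^\perp$ the gradient $\nabla H$ is constant, since its derivative is $H''\dot\gamma=0$.

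Next I would show that these integral curves are straight, with constant direction along each. Locally $\nabla H$ maps $U$ onto an embedded $C^1$ curve whose tangent at $\nabla H(x)$ is $w(x)$. All points of $\gamma$ map to the same point $v$, so $w$ is the same at all of them, $\gamma$ has constant direction, and hence $\gamma$ is a segment. Then $\nabla H$ is constant on maximal segments through $U$, and $H$ is affine on each such segment.

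The main obstacle is the global step: showing that each such segment extends to a full line in $\R^2$ and that all these lines are parallel. For extension, I would argue as Hartman--Nirenberg do. Suppose a maximal segment $\ell$ ends at a point $z$. By continuity $\nabla H\equiv v$ on all of $\ell$. Near $z$, the ruling segments through nearby points of $U$ have nearly the same direction. An estimate on the Lipschitz behaviour of the line field, coming from the $C^1$ regularity of $\nabla H$, then shows that $z$ cannot be an endpoint. On the complement of $U$, every interior component has $H''\equiv0$, so $H$ is affine there. Boundary points of $U$ lie on limits of rulings, which are themselves full lines.

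Given that all rulings are full lines, parallelism follows from two observations. Two distinct full lines on which $\nabla H$ takes different values cannot meet. Two rulings with the same gradient value can be merged, since the function is affine along them. Hence all rulings are parallel to a fixed direction $a^\perp$, so $\nabla H$ is constant along that direction everywhere. This gives $H(x)=h(a^\T x)+b^\T x+c$ with $h\in C^2$, which is the claimed form after absorbing $b$.
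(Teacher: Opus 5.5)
Your reduction via $SJS=(\det S)J$ followed by an appeal to Hartman--Nirenberg is exactly the paper's proof, which consists of these two steps and nothing more. Your check of the converse, and of the form $x\mapsto x+\tau g'(a^\T x)Ja+\tau Jb$ of the layer, is a correct addition. If you cite the theorem, the proof is complete.

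The reconstruction of Hartman--Nirenberg that you sketch, however, has a gap at the crux. The $C^1$ regularity of $\nabla H$ gives only a continuous kernel line field on $U=\{H''\ne0\}$. It does not by itself give a Lipschitz estimate, and it says nothing at the endpoint $z\in\partial U$, where the line field is undefined. You also do not say what an endpoint would contradict. What must be shown is that the nonzero eigenvalue $\lambda$ in $H''=\lambda ww^\T$ cannot tend to $0$ along a ruling. One way to do this:
\begin{itemize}
 \item Parametrize the ruling as $x_0+te$, with $w\perp e$ constant along it.
 \item Let $\alpha(s)$ be the signed angle between $e$ and the ruling through $x_0+sw$.
 \item $\nabla H$ takes the same value at $x_0+sw$ and at the point with parameter $t$ on that ruling. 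Expanding $w^\T\nabla H$ to first order at $x_0$ and at $x_0+te$ shows that $\kappa=\alpha'(0)$ exists and that $\lambda(t)(1+\kappa t)=\lambda(0)$ on the part of the ruling inside $U$.
\end{itemize}
Hence, as the ruling approaches $\partial U$, $\lambda$ either stays away from $0$ or blows up. Either way this contradicts $H''(z)=0$ and continuity. So the ruling through any point of $U$ is a full line contained in $U$.

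Without this fact, your parallelism step fails as written. Suppose two non-parallel rulings carrying the same gradient value $v$ meet at $z$. Then both directions lie in $\ker H''(z)$, which only says $H''(z)=0$. That is no contradiction, and ``merging'' two crossing lines has no meaning. With the fact above the argument closes: a crossing point of two non-parallel rulings would lie in $U$ and would have a two-dimensional kernel, contradicting rank one. So all rulings, from all components of $U$, are parallel to a single direction $e$. Then $H''e=0$ on $U$, and trivially on $\R^2\setminus U$, so $\partial_eH$ is constant and $H$ has the stated form. Your remark about boundary points of $U$ lying on limits of rulings then becomes unnecessary.
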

\begin{proof}
For a symmetric $2\times2$ matrix $A$, $AJA=\det(A)J$. Entire $C^2$ solutions of the
homogeneous Monge--Amp\`ere equation in the plane are cylinders by the theorem of Hartman and
Nirenberg \cite{hartman1959spherical}.
\end{proof}

\begin{proposition}[two or more degrees of freedom]\label{prop:counter}
For $n\ge2$ there are smooth solutions of \eqref{eq:nilpotent} on $\R^{2n}$ that are not of
the form $\phi(Cx)+b^\T x$ with $CJC^\T=0$. For $n=2$,
\[
 H(q,p)=q_2\cos q_1+p_2\sin q_1
\]
is one: the image of $H''(x)$ is spanned by $e_{q_1}$ and $-\sin q_1\,e_{q_2}+\cos q_1\,e_{p_2}$,
which is isotropic for every $x$, while the images at different $x$ together span a
three-dimensional, hence non-isotropic, subspace.
\end{proposition}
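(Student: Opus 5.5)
Since the statement is existential, it is enough to verify the explicit example for $n=2$ and then extend it to all $n\ge2$. Order the coordinates as $x=(q_1,q_2,p_1,p_2)$. First I compute the Hessian of $H=q_2\cos q_1+p_2\sin q_1$. The only nonzero second derivatives are
\[
H_{q_1q_1}=-q_2\cos q_1-p_2\sin q_1,\qquad H_{q_1q_2}=-\sin q_1,\qquad H_{q_1p_2}=\cos q_1 .
\]
So $H''=e_{q_1}w^\T+w e_{q_1}^\T+h\,e_{q_1}e_{q_1}^\T$ with $w=-\sin q_1\,e_{q_2}+\cos q_1\,e_{p_2}$ and $h=H_{q_1q_1}$. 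Its image is spanned by $e_{q_1}$ and $w$, as the statement claims. A symmetric $H''$ satisfies $H''JH''=0$ exactly when $v^\T Jv'=0$ for all $v,v'$ in its image, so I only need the symplectic pairings of $e_{q_1}$ and $w$. The form $\omega(u,v)=u^\T Jv$ pairs $q_j$ with $p_j$. We have $\omega(e_{q_1},w)=0$, because $w$ has no $p_1$ component and $e_{q_1}$ has no $p_2$ component. Also $\omega(e_{q_1},e_{q_1})=\omega(w,w)=0$ by antisymmetry. Hence the image is isotropic and \eqref{eq:nilpotent} holds, so by Proposition~\ref{prop:residual} the residual map is symplectic.

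Next I show that $H$ is not of the form $\phi(Cx)+b^\T x$ with $CJC^\T=0$. For such a function $H''=C^\T\phi''C$, so every image lies in the fixed subspace $\operatorname{im}C^\T$. The condition $CJC^\T=0$ says this subspace is isotropic, so its dimension is at most $n=2$. For our $H$, however, the images at different points together contain $e_{q_1}$, the vector $w(0)=e_{p_2}$, and the vector $w(\pi/2)=-e_{q_2}$. These three vectors span a 3-dimensional space. It is not isotropic because $\omega(e_{q_2},e_{p_2})\ne0$, and in any case 3 exceeds the maximal isotropic dimension 2. This contradiction proves the claim for $n=2$. One subtlety is that the decomposition might use a matrix $C$ with more rows than $n$. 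This does not matter, since the argument uses only $\operatorname{im}C^\T$ being isotropic.

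For $n>2$, I take the same $H$ as a function of $(q_1,q_2,p_1,p_2)$ only, independent of the remaining variables. Then $H''$ is the $n=2$ Hessian padded with zeros, its image sits in the first two conjugate pairs, and both the isotropy and the dimension-3 span argument carry over unchanged. I expect no real obstacle in this proof. The only step needing care is the linear-algebra equivalence between \eqref{eq:nilpotent} and isotropy of the image for a symmetric matrix: $AJA=0$ iff $u^\T A J A v=0$ for all $u,v$, i.e.\ iff $\omega(Au,Av)=0$.
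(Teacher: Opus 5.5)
Your proof is correct and takes the paper's approach. You show that the image of $H''$ is the isotropic plane spanned by $e_{q_1}$ and $-\sin q_1\,e_{q_2}+\cos q_1\,e_{p_2}$, and you rule out the ridge form because its Hessian images all lie in the fixed isotropic subspace $\operatorname{im}C^\T$, whereas yours together span the non-isotropic space $\langle e_{q_1},e_{q_2},e_{p_2}\rangle$; the zero-padding to $n>2$ fills in a step the paper leaves implicit. For $n\ge3$ it is the non-isotropy $\omega(e_{q_2},e_{p_2})\neq0$ that carries over, not the dimension count $3>n$, and your argument already provides it, so the phrase ``carries over unchanged'' should say so.
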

The example is the symplectic reading of Sacksteder's complete non-cylindrical hypersurface
with Gauss map of rank two \cite{sacksteder1960hypersurfaces}. More generally
$H=F(q_A)+c(q_A)^\T y_B$ satisfies \eqref{eq:nilpotent} whenever $c^*\omega_B=0$, in particular
for every curve $c$ when $|A|=1$. These Hamiltonians are instances of the nested integrals of
Section~\ref{sec:moving}. A classification of the solutions of \eqref{eq:nilpotent} for
$n\ge2$ is, to our knowledge, open.

\begin{remark}
A single explicit layer whose accuracy grows with width, for arbitrary targets, would amount
to an explicit symplectic one-step method for general non-separable Hamiltonians, which is
not known \cite{mclachlan2002splitting,hairer2006geometric}. Corollary~\ref{cor:n1} shows that
the residual form cannot provide one in one degree of freedom.
\end{remark}

\section{Fiber-preserving layers}\label{sec:fiber}

For a diffeomorphism $f$ of $\R^n$ and $F:\R^n\to\R$,
\begin{equation}\label{eq:fiber}
 Q=f(q),\qquad P=Df(q)^{-\T}\bigl(p+\nabla F(q)\bigr)
\end{equation}
is symplectic, with generating function $S(q,P)=f(q)^\T P-F(q)$; these are the symplectic
maps preserving the fibration $(q,p)\mapsto q$. Li et al.\ \cite{li2020neural} take $f$ to be
an explicitly invertible flow and obtain $P$ by a vector--Jacobian product of $f^{-1}$.
We instead take
\[
 f(q)=q+B(q)
\]
with $B$ an unconstrained dense network. Since $S$ is linear in $P$, the forward map is still
explicit: one Jacobian and one $n\times n$ linear solve. The inverse map requires $f^{-1}$ and
is not explicit; the layer is symplectic wherever $Df$ is invertible. The dual layer acts on
$p$: $P=g(p)$, $Q=Dg(p)^{-\T}(q-\nabla G(p))$. We write F and G for the two layers.

\begin{proposition}\label{prop:linear}
Let $M=\begin{psmallmatrix}A&B\\C&D\end{psmallmatrix}\in\Sp(2n)$ with $A$ invertible. Then
\[
 M=\begin{pmatrix}I&0\\CA^{-1}&I\end{pmatrix}\begin{pmatrix}A&B\\0&A^{-\T}\end{pmatrix},
\]
where $CA^{-1}$ is symmetric. The second factor is a linear G layer and the first a $q$-shear,
so a generic linear symplectic map is one G layer followed by one F layer. Compositions of
unit triangular symplectic matrices need up to five factors \cite{jin2022optimal}.
\end{proposition}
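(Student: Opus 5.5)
The plan is to verify the factorization by direct multiplication, then read off the symplectic conditions that make each factor belong to the advertised layer class. Write the symplecticity of $M$ as $M^\T JM=J$. In blocks this gives three identities:
\[
 A^\T C=C^\T A,\qquad B^\T D=D^\T B,\qquad A^\T D-C^\T B=I .
\]
Multiplying the two factors gives
\[
 \begin{pmatrix}A&B\\ CA^{-1}A & CA^{-1}B+A^{-\T}\end{pmatrix}.
\]
The top row and the lower-left block already agree with $M$, so the only real check is that
\[
 CA^{-1}B+A^{-\T}=D .
\]

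First, symmetry of $CA^{-1}$. From $A^\T C=C^\T A$, multiply on the left by $A^{-\T}$ and on the right by $A^{-1}$. This gives $CA^{-1}=A^{-\T}C^\T=(CA^{-1})^\T$.

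Second, the lower-right block. From $A^\T D-C^\T B=I$ we get $D=A^{-\T}(I+C^\T B)=A^{-\T}+A^{-\T}C^\T B$. By the symmetry just shown, $A^{-\T}C^\T=CA^{-1}$, so $D=A^{-\T}+CA^{-1}B$, as required.

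Third, identification with layers. The first factor is $(q,p)\mapsto(q,p+Sq)$ with $S=CA^{-1}$ symmetric. This is the $q$-shear \eqref{eq:shears} with $F(q)=\tfrac12 q^\T Sq$, i.e.\ a linear F layer with $f=\mathrm{id}$. The second factor maps $(q,p)\mapsto(Aq+Bp,\,A^{-\T}p)$. To match the G layer $P=g(p)$, $Q=Dg(p)^{-\T}(q-\nabla G(p))$, take $g(p)=A^{-\T}p$, so that $Dg^{-\T}=A$. We then need $A(q-\nabla G(p))=Aq+Bp$, i.e.\ $\nabla G(p)=-A^{-1}Bp$. This requires $A^{-1}B$ to be symmetric.

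Establishing that symmetry is the one step needing a further identity, and it is the main (minor) obstacle. From $MJM^\T=J$, which holds since $M^{-1}=-JM^\T J$ is also symplectic, the top-left block gives $AB^\T=BA^\T$. Conjugating by $A^{-1}$ yields $A^{-1}B=B^\T A^{-\T}=(A^{-1}B)^\T$. So $G(p)=-\tfrac12 p^\T A^{-1}Bp$.

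Since the matrix product applies the right factor first, $M$ is the G layer followed by the F layer. Genericity is immediate, because $\det A\neq0$ is an open dense condition in $\Sp(2n)$. The remark about five unit-triangular factors is simply cited from \cite{jin2022optimal}.
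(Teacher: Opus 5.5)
Your proof is correct and follows the paper's argument. You use the block identities $A^\T C=C^\T A$ and $A^\T D-C^\T B=I$ from $M^\T JM=J$ to get the symmetry of $CA^{-1}$ and the lower-right block, then read off the factors as a $q$-shear and a linear G layer with $G(p)=-\tfrac12p^\T A^{-1}Bp$; the only difference is that you derive the symmetry of $A^{-1}B$ explicitly from $MJM^\T=J$ (giving $AB^\T=BA^\T$), where the paper just notes that the second factor is symplectic.
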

\begin{proof}
$M^\T JM=J$ gives $A^\T C=C^\T A$ and $A^\T D-C^\T B=I$. The first identity is the symmetry of
$CA^{-1}$; the second gives $D-CA^{-1}B=A^{-\T}$, which is the lower right block of the product.
The second factor is $P=A^{-\T}p$, $Q=A(q-\nabla G(p))$ with $G(p)=-\tfrac12p^\T A^{-1}Bp$, and
$A^{-1}B$ is symmetric because the factor is symplectic.
\end{proof}

Shears are the special cases $f=\mathrm{id}$ of F and G and $M_j=0$ of the learned-type layer,
so networks of either kind inherit the universal approximation property of shear networks
\cite{turaev2003polynomial,jin2020sympnets}.

\section{Time as an input}\label{sec:time}

\paragraph{Step-size-aware networks.}
Replacing each layer $\varphi^{H_k}_1$ by $\varphi^{H_k}_h$ gives a family
$\psi_h=\varphi^{H_K}_h\circ\dots\circ\varphi^{H_1}_h$ with $\psi_0=\mathrm{id}$ and
$\partial_h\psi_h|_{h=0}=J\nabla\sum_kH_k$: a splitting integrator for the learned Hamiltonian
$\sum_kH_k$. With $q$/$p$ shears alone the generator is separable \cite{janik2025time}; with
stretch layers it is $K(p)+V(q)+W(q\odot p)$, and with learned-type or learned-plane layers a
sum of functions of quadratic or linear forms. Tao's test Hamiltonian \cite{tao2016explicit}
$\tfrac12(q^2+1)(p^2+1)=\tfrac12q^2+\tfrac12p^2+\tfrac12(qp)^2+\tfrac12$ lies in the second
class, and is therefore integrated by an explicit splitting method that is symplectic in the
original phase space.

\paragraph{Conjugated integrable flows.}
For a symplectic network $\psi$ and a single layer with integrals $I$ and potential $\Phi$,
\begin{equation}\label{eq:conj}
 x(t)=\psi^{-1}\bigl(\varphi^{\Phi\circ I}_t(\psi(x_0))\bigr)
\end{equation}
is an exact one-parameter group of symplectic maps, the flow of $H=\Phi\circ I\circ\psi$,
which it conserves exactly. With a twist layer this is an action-angle model in Cartesian
form \cite{bondesan2019learning,daigavane2022learning}; with a learned-type layer the model
need not be told whether each degree of freedom is elliptic or hyperbolic.
Model \eqref{eq:conj} can only represent integrable dynamics.

\section{Experiments}\label{sec:experiments}

\paragraph{Setup.}
Targets are flow maps of Hamiltonian systems with one to forty degrees of freedom
(Table~\ref{tab:systems}); reference solutions are computed to tolerance $10^{-12}$. Models
are trained on 1000 uniformly sampled initial conditions (2000 for $n\ge3$) by full-batch Adam
(5000 epochs, learning rate $10^{-2}\to10^{-4}$) followed by at most 500 L-BFGS iterations, in
double precision, and evaluated on 4000 fresh points. All scalar potentials are dense networks
with hidden sizes $(20,10)$ ($(40,20)$ for $n\ge3$) and $\tanh$ activation. We report medians
over seeds; models are matched in parameter count within about $20\%$ unless stated.
Layer codes: Q, P are the shears \eqref{eq:shears}; S the stretch map \eqref{eq:stretch};
T the twist map; W the learned-type layer \eqref{eq:quad-layer}; Z a shear along a learned
Lagrangian plane in the parametrization of \cite{tapley2024symplectic}; M the moving-plane
shear \eqref{eq:moving}; F, G the fiber layers \eqref{eq:fiber}; L a linear symplectic map
(four unit triangular factors and a translation). A string such as
\texttt{PQWPQ} lists the layers in order of application. G-SympNets, LA-SympNets
\cite{jin2020sympnets} and H\'enonNets \cite{burby2021henonnets} are our own
re-implementations.

\begin{table}[t]
\centering\footnotesize
\caption{Test systems. ``In class'' marks Hamiltonians of the form $V(q)+K(p)+W(q\odot p)$.}
\label{tab:systems}
\begin{tabular}{llll}
\toprule
system & $n$ & Hamiltonian & remarks\\
\midrule
pendulum & 1 & $p^2/2-\cos q$ & separable\\
saddle & 1 & $p^2/2-\log\cosh q$ & separable, hyperbolic\\
double well & 1 & $p^2/2-q^2/2+q^4/4$ & separable, saddle and two centers\\
Tao & 1 & $(q^2+1)(p^2+1)/2$ & non-separable, in class\\
H\'enon--Heiles & 2 & standard & separable\\
coupled Tao & 2 & $\sum_i(q_i^2+1)(p_i^2+1)/2+q_1p_1q_2p_2/2$ & non-separable, in class\\
Tao chain & 6, 10 & $\sum_i(q_i^2+1)(p_i^2+1)/2+q_ip_iq_{i+1}p_{i+1}/2$ & non-separable, in class\\
charged particle & 2 & $|p-A(q)|^2/2$, $B=1+|q|^2$ & non-separable\\
double pendulum & 2 & standard & non-separable\\
three-link pendulum & 3 & $p^\T M(q)^{-1}p/2+V(q)$ & non-separable, chaotic\\
two charged particles & 4 & as above, soft Coulomb coupling & non-separable\\
nonlinear Schr\"odinger lattice & 6--40 & $\sum_i\tfrac12|\psi_{i+1}-\psi_i|^2+\tfrac14|\psi_i|^4$ & non-separable\\
saturable lattice & 10, 20 & $\sum_i\tfrac12|\psi_{i+1}-\psi_i|^2+\log(1+|\psi_i|^2)$ & non-separable\\
FPUT-$\beta$ chain & 8 & standard & separable\\
saddle--center & 2 & $\Phi_0(\tfrac12(Q_1^2+P_1^2),Q_2P_2)\circ\psi_0$ & integrable, mixed type\\
periodic Toda lattice & 3 & $\sum_ip_i^2/2+e^{q_i-q_{i+1}}$ & integrable, one free pair\\
\bottomrule
\end{tabular}
\end{table}

\subsection{Training and convergence}\label{sec:convergence}

Figure~\ref{fig:convergence} shows the training loss per epoch and the test error every 250
epochs for one representative run of each model on nine systems covering every experiment
group. Three features recur. The Adam phase brings every model to a plateau that is set by
the architecture: on the same data, models with the right integrals reach plateaus one to
three orders of magnitude lower, and they are close to them within the first thousand
epochs. L-BFGS lowers the loss by a further factor of two to twenty and is where much of the
final accuracy of the best models comes from. The test error follows the training loss
throughout; after L-BFGS the gap is within a factor of three for the models with the right
integrals and up to a factor of seven for plain shear networks (Tao's system, charged
particle), which fit the training set better than they generalize. No run in this figure
diverged; divergence during training was observed only for some seeds of the learned-type
core in conjugated models (Section~\ref{sec:conj}), where it is reported.

\begin{figure}[p]
\centering
\includegraphics[width=\textwidth]{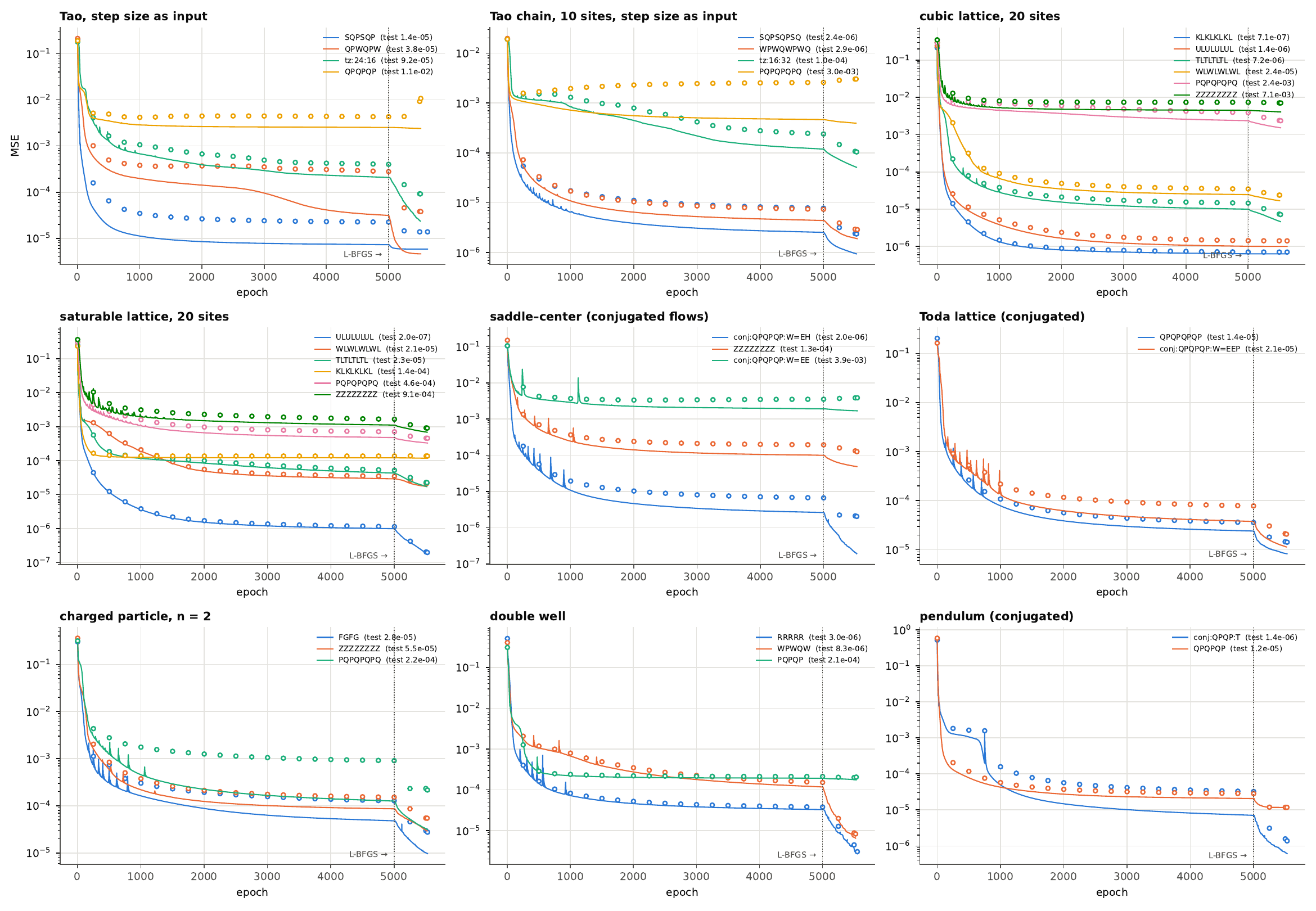}
\caption{Training loss per epoch (lines) and test error every 250 epochs (circles) for one
representative run per model and system. Adam for 5000 epochs, then L-BFGS (dotted line).}
\label{fig:convergence}
\end{figure}

\subsection{Width does not help; depth and layer type do}

Figure~\ref{fig:width} shows the one-step test error when the hidden sizes of all potentials
are multiplied by two and four at fixed depth. For $q$/$p$ shears, learned-plane shears and
fiber layers alike, a twelve-fold increase in parameters leaves the error unchanged. The
error is determined by the number and the kind of layers. Figure~\ref{fig:depth} shows the
complementary experiment: networks made only of learned-type layers reach a given accuracy
with fewer layers than shear networks in one degree of freedom (with three layers,
$1.1\times10^{-6}$ against $4.6\times10^{-4}$ on the pendulum and $5.3\times10^{-5}$ against
$3.8\times10^{-2}$ on Tao's system), and all families converge at eight layers.

\begin{figure}[t]
\centering
\includegraphics[width=\textwidth]{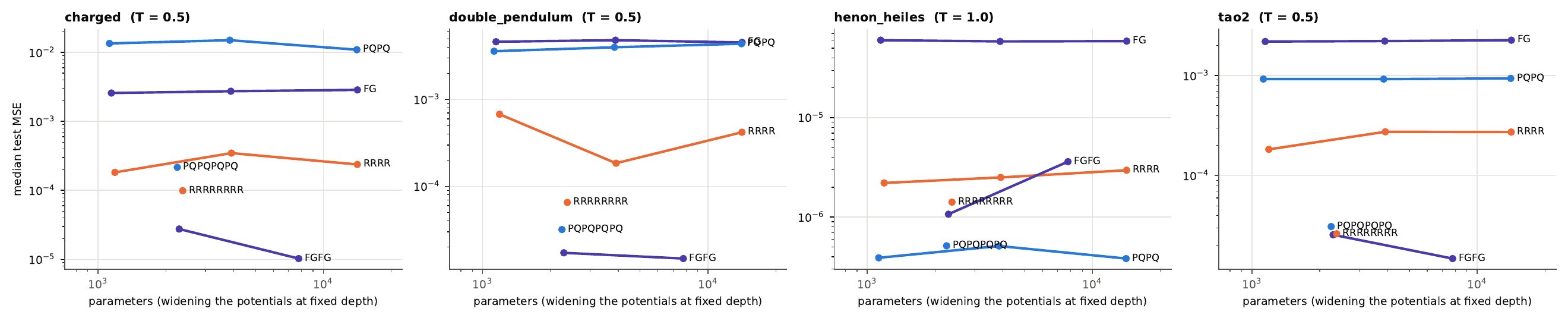}
\caption{Width study at fixed depth, two degrees of freedom. Lines join the same architecture
at hidden sizes $(20,10)$, $(40,20)$, $(80,40)$; isolated points are deeper networks.}
\label{fig:width}
\end{figure}

\begin{figure}[t]
\centering
\includegraphics[width=0.9\textwidth]{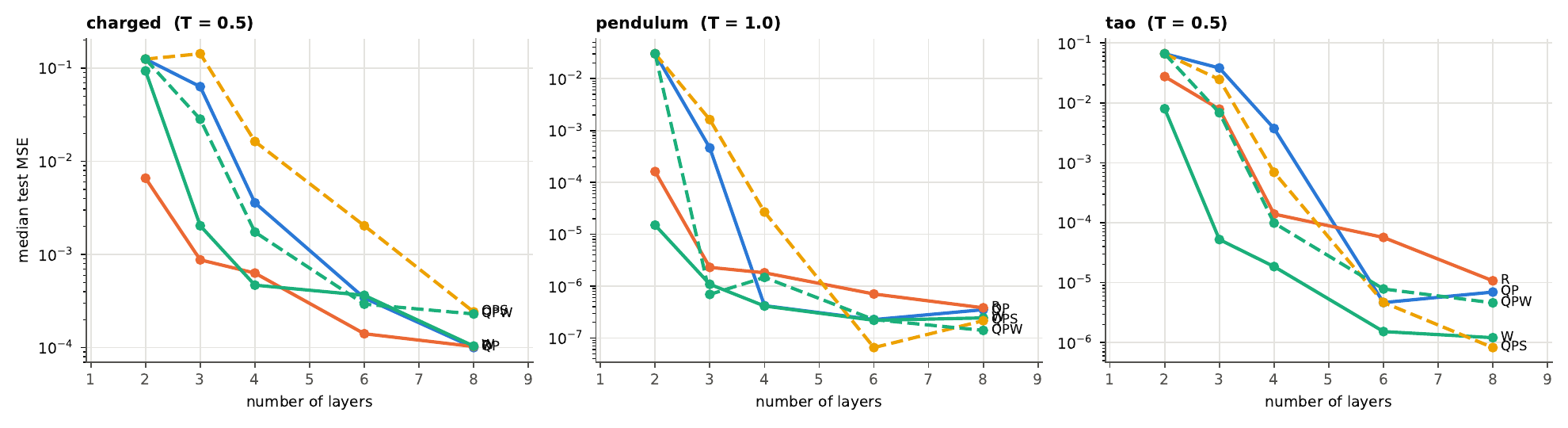}
\caption{Depth study: median test error against the number of layers, by layer family.}
\label{fig:depth}
\end{figure}

\subsection{Step size as an input}

Networks are evaluated at $t=h$ and trained on pairs $(x,h,\varphi_h(x))$, $h\in[0,h_{\max}]$.
Table~\ref{tab:stepped} confirms the obstruction of \cite{janik2025time}: on the systems
with non-separable Hamiltonian in the class $V+K+W(q\odot p)$, $q$/$p$-shear networks have
relative error about $40\%$ on small steps and recover the Hamiltonian only to $22$--$32\%$,
whereas networks containing stretch or learned-type layers recover it to $1$--$2\%$
(Figure~\ref{fig:generators}). Learned-plane shears, whose generators are not restricted to
separable Hamiltonians, lie in between ($4$--$10\%$). In one-step error they are $4$ and $16$
times less accurate than the stretch networks in one and two degrees of freedom, and $70$
and $45$ times less accurate on the chains of six and ten oscillators. On
separable systems all families perform alike. On the two non-separable systems outside the
class, learned-plane shears are best and the stretch layer is the least accurate model: the
benefit of the stretch layer is confined to its generator class. The twist layer, which
carries the integral $q_i^2+p_i^2$, fails on the chains (generator error $20\%$) and wins on
the lattice of Section~\ref{sec:lattice}. There the stretch layer is not excluded, since
$(q^2+p^2)^2=q^4+p^4+2(qp)^2$ also lies in its class, but it is ten times less accurate than
the twist layer ($6.1\cdot10^{-5}$ against $5.6\cdot10^{-6}$ at six sites). The learned-type
layer is within a factor of six of the better specialist in both cases, without being told
which integral the system has.

\begin{table}[t]
\centering\small
\caption{Step size as input: one-step test MSE, median of three seeds (five for the Tao-type
rows); $h_{\max}=0.5$, and $0.3$ for the last four rows. The learned-plane column
reports the better of two configurations of the layer of \cite{tapley2024symplectic}: ours
(two hidden layers, as for the other models) and the one of the author's public code (one
hidden layer, initialization at scale $0.01$, 12--24 layers); on the Tao-type rows the
latter is better. Six layers for $n\le2$, eight for the
chains (periodic chains of Tao oscillators coupled through $q_ip_iq_{i+1}p_{i+1}/2$).
}
\label{tab:stepped}
\begin{tabular}{lccccc}
\toprule
system & \texttt{QPQPQP} & G-SympNet & learned-plane & with S & with W\\
\midrule
pendulum & $2.8\cdot10^{-7}$ & $5.6\cdot10^{-7}$ & $1.0\cdot10^{-7}$ & $1.2\cdot10^{-7}$ & $6.0\cdot10^{-8}$\\
saddle & $5.1\cdot10^{-8}$ & $3.2\cdot10^{-7}$ & $2.6\cdot10^{-7}$ & $1.2\cdot10^{-7}$ & $5.3\cdot10^{-8}$\\
H\'enon--Heiles & $3.1\cdot10^{-7}$ & $9.9\cdot10^{-7}$ & $9.2\cdot10^{-8}$ & $3.3\cdot10^{-7}$ & $2.2\cdot10^{-7}$\\
Tao & $8.0\cdot10^{-3}$ & $2.3\cdot10^{-3}$ & $6.7\cdot10^{-5}$ & $1.6\cdot10^{-5}$ & $8.2\cdot10^{-5}$\\
coupled Tao & $7.0\cdot10^{-4}$ & $7.9\cdot10^{-4}$ & $2.1\cdot10^{-5}$ & $1.3\cdot10^{-6}$ & $1.1\cdot10^{-5}$\\
Tao chain, 6 sites & $4.9\cdot10^{-4}$ & -- & $4.2\cdot10^{-5}$ & $6.0\cdot10^{-7}$ & $1.1\cdot10^{-6}$\\
Tao chain, 10 sites & $3.0\cdot10^{-3}$ & -- & $1.0\cdot10^{-4}$ & $2.2\cdot10^{-6}$ & $2.2\cdot10^{-6}$\\
charged particle & $2.4\cdot10^{-3}$ & $5.5\cdot10^{-4}$ & $5.1\cdot10^{-5}$ & $8.1\cdot10^{-3}$ & $4.0\cdot10^{-4}$\\
double pendulum & $2.5\cdot10^{-4}$ & $5.8\cdot10^{-4}$ & $5.5\cdot10^{-5}$ & $6.8\cdot10^{-4}$ & $3.3\cdot10^{-4}$\\
\bottomrule
\end{tabular}
\end{table}

\begin{figure}[t]
\centering
\includegraphics[width=\textwidth]{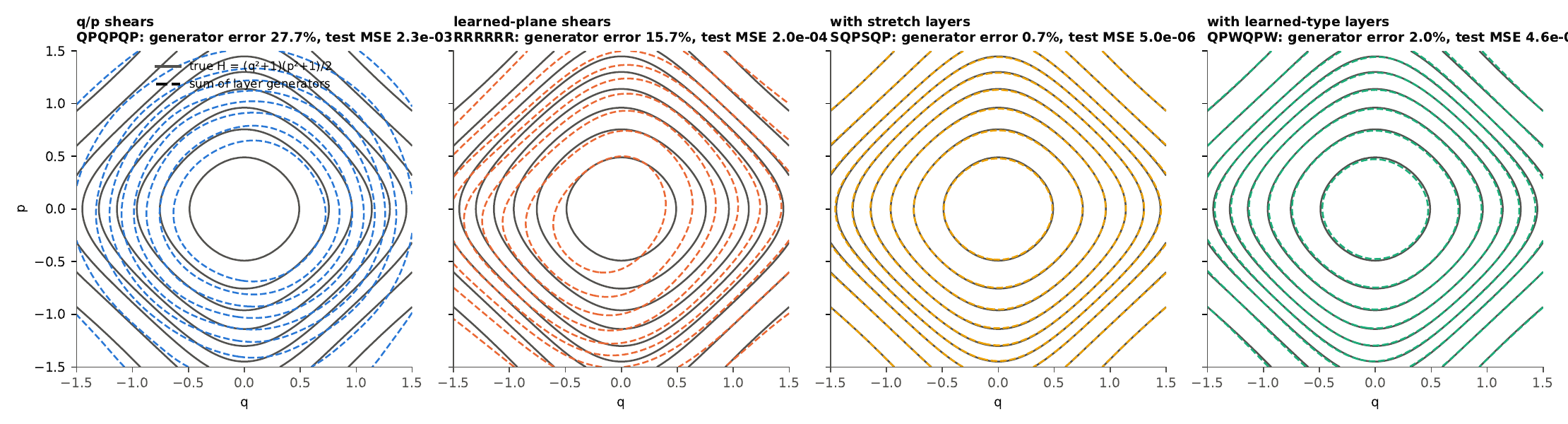}
\caption{Tao's system, step size as input. Level sets of the true Hamiltonian (gray) and of
the sum of the layer generators (dashed), at the same level values.}
\label{fig:generators}
\end{figure}

Figures~\ref{fig:tao-stepped} and~\ref{fig:chain-stepped} show the learned maps iterated
from test initial conditions. On Tao's system the twelve-layer learned-plane network and the
stretch network both follow the true orbits for eighty steps, while the $q$/$p$-shear network
drifts across energy levels; on the ten-site chain only the networks with stretch or
learned-type layers stay on the solution, and the learned-plane network leaves it after
about ten steps.

\begin{figure}[t]
\centering
\includegraphics[width=\textwidth]{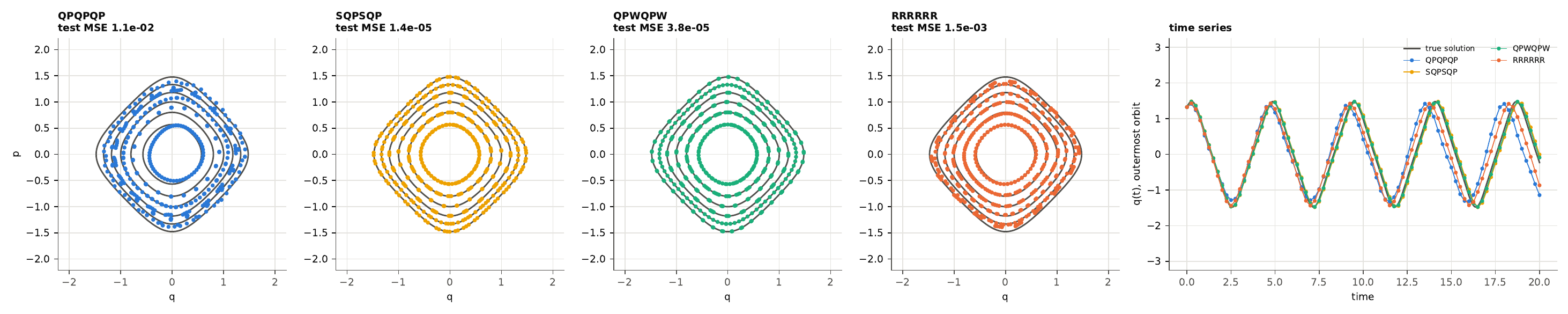}
\caption{Tao's system, step size as input: true orbits (gray) and the learned maps iterated
80 times with step $0.25$ from six test initial conditions; right: $q(t)$ on the outermost
orbit.}
\label{fig:tao-stepped}
\end{figure}

\begin{figure}[t]
\centering
\includegraphics[width=\textwidth]{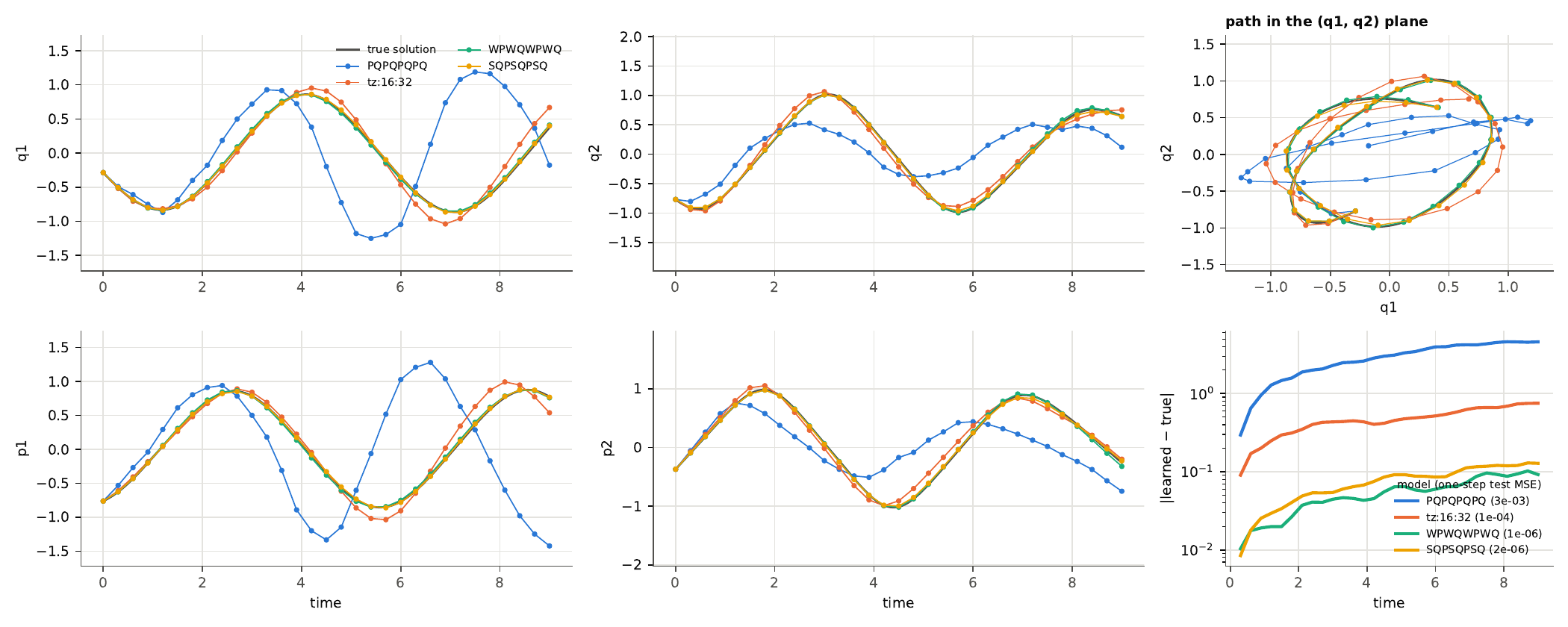}
\caption{Chain of ten Tao oscillators, step size as input: true solution (gray) and the
learned maps iterated 30 times with step $0.3$ from one test initial condition (first two
sites shown); bottom right: error against time.}
\label{fig:chain-stepped}
\end{figure}

\subsection{Conjugated integrable flows}\label{sec:conj}

Model \eqref{eq:conj} is trained on $t\in[0,2]$ and evaluated, by a single evaluation, far
beyond. For orbits that remain in the training region, the error at $t=40$ is
$5\cdot10^{-3}$--$7\cdot10^{-3}$ on the pendulum and $2\cdot10^{-2}$--$5\cdot10^{-2}$ on Tao's
system, against $10^{-1}$ and $>10$ for step-size-aware networks iterated twenty times
(Figure~\ref{fig:longtime}). The learned conserved quantity is a monotone function of the
true energy (rank correlation above $0.9998$).

\begin{figure}[t]
\centering
\includegraphics[width=\textwidth]{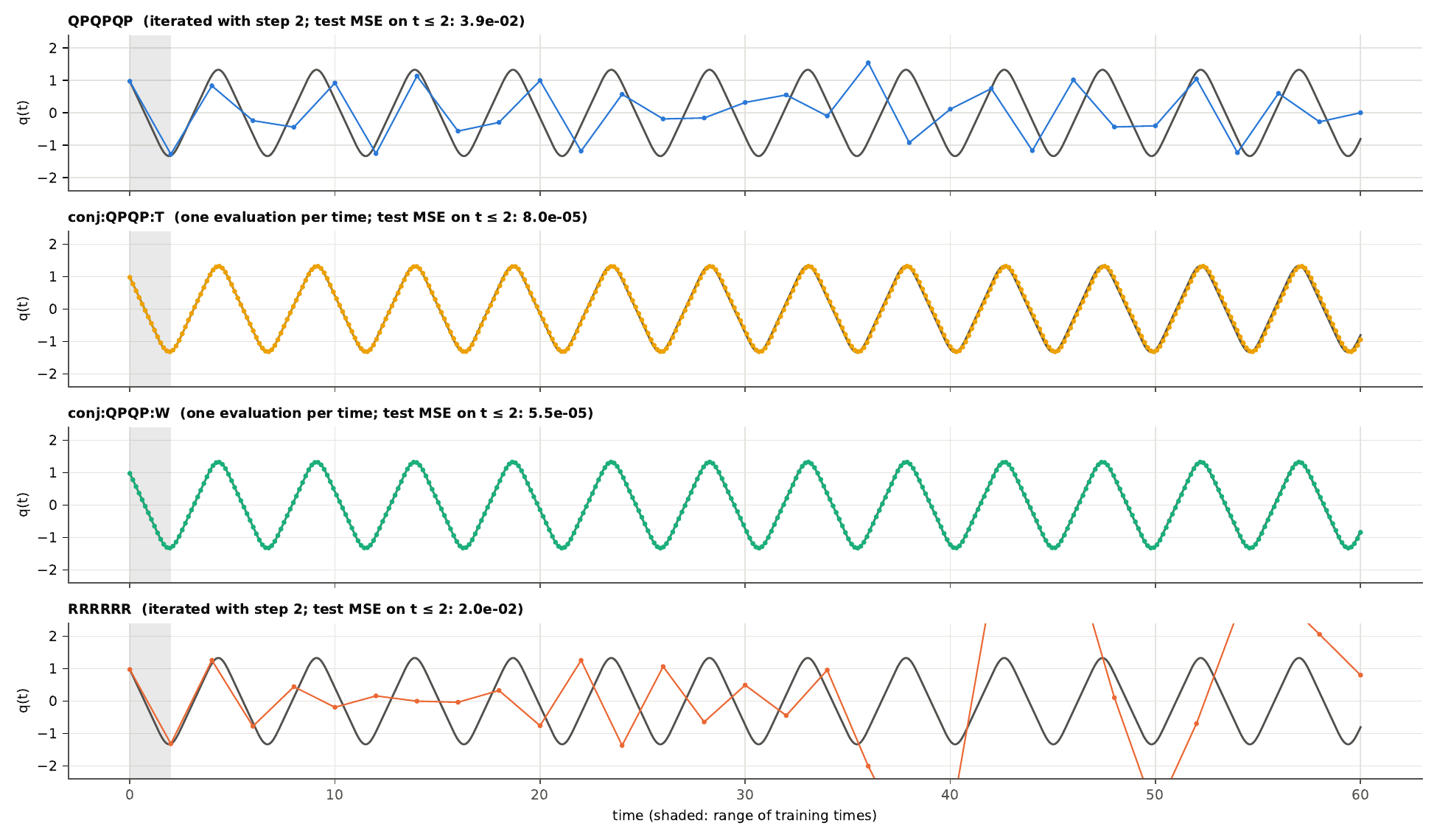}
\caption{Tao's system: true solution (gray) and learned solutions, trained on $t\le2$.}
\label{fig:longtime}
\end{figure}

\begin{figure}[t]
\centering
\includegraphics[width=\textwidth]{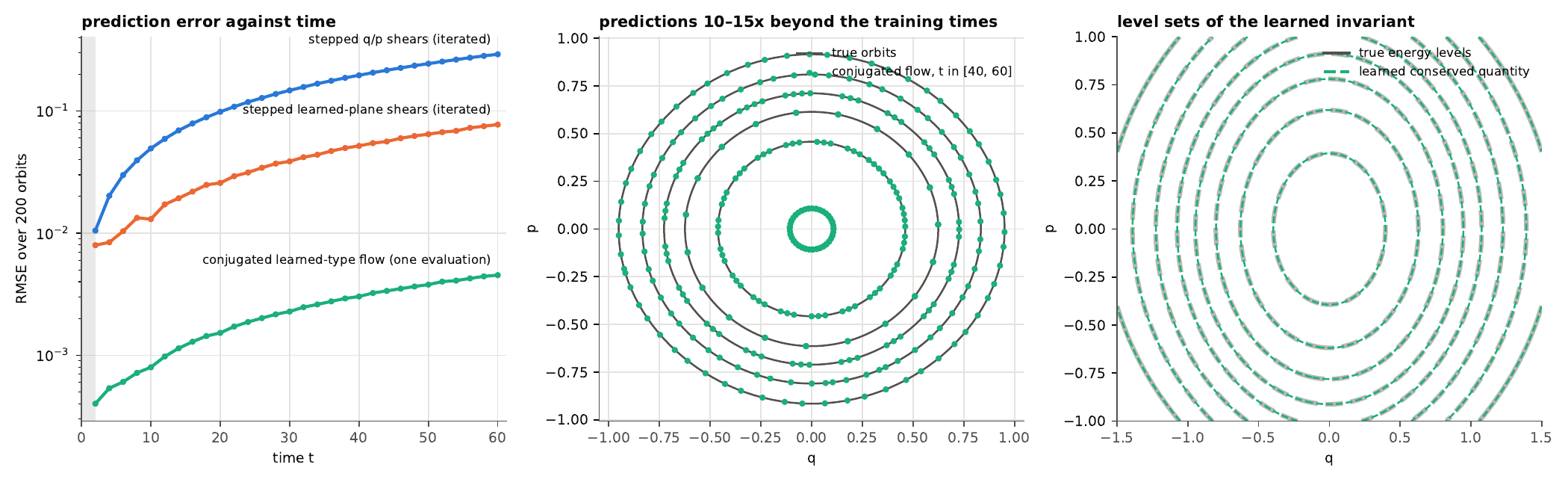}
\caption{Pendulum: conjugated learned-type flow against step-size-aware networks, trained on
$t\le2$. Left: prediction error against time to $t=60$. Middle: predictions at
$t\in[40,60]$ (dots) on the true orbits. Right: level sets of the learned conserved quantity
over the true energy levels.}
\label{fig:pendulum-long}
\end{figure}

\paragraph{Mixed type.}
The saddle--center system has one elliptic and one hyperbolic pair behind a nonlinear change
of variables, so neither a twist core nor a stretch core can represent it. With a
learned-type core the model finds one type of each and reaches test MSE
$2\cdot10^{-6}$--$3\cdot10^{-6}$ in three of five seeds, against $1.3\cdot10^{-4}$ for the best
step-size-aware network, $3.8\cdot10^{-3}$ for a twist core and $7.9\cdot10^{-3}$ for a
stretch core (Figure~\ref{fig:mixed}).

\begin{figure}[t]
\centering
\includegraphics[width=\textwidth]{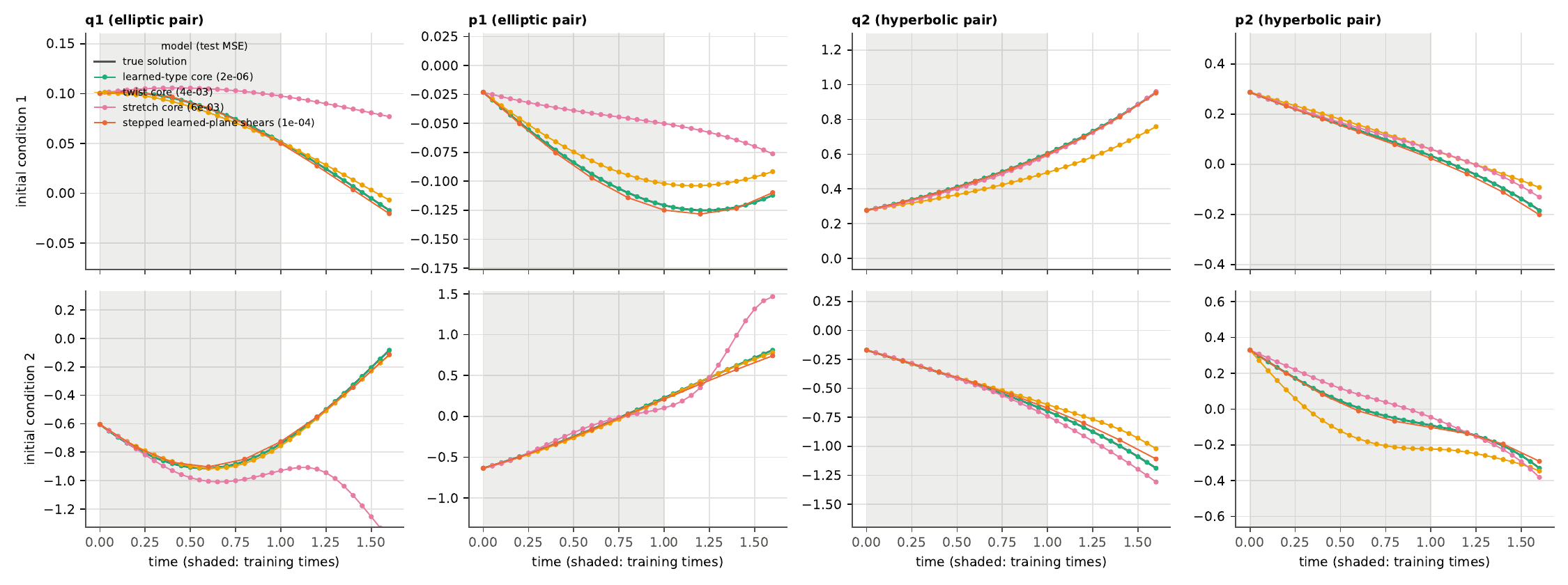}
\caption{Saddle--center system: true solution (gray) and learned solutions.}
\label{fig:mixed}
\end{figure}

\paragraph{Reliability.}
With the correct type fixed in advance, conjugated models train reliably: over ten seeds a
twist core fits the pendulum and Tao's system in ten runs each, and a stretch core fits the
saddle in eight. With a learned-type core a single run succeeds in 4--9 of ten runs,
depending on the system and on the initialization of $M_j$ and $\psi$; runs that end in the
wrong type never fit. Not knowing the type therefore costs reliability. The training loss
identifies the successful runs in every case, so selecting the best of three restarts by
training loss succeeds with probability $0.83$--$1$; when it succeeds, the learned-type core
is as accurate as the fixed one.
For few degrees of freedom there is a more reliable protocol: fix the types in the core,
train one model per type assignment, and keep the one with the lowest training loss. On the
saddle--center system the correct assignment trains in five of five runs (test MSE
$5\cdot10^{-7}$--$6\cdot10^{-6}$), and the three wrong assignments are separated from it by four
orders of magnitude in training loss.

\paragraph{Toda lattice.}
The periodic Toda lattice with three particles is integrable with two oscillatory pairs and
one free pair (the center of mass), so the correct core is elliptic--elliptic--parabolic.
With these types fixed the conjugated model fits in five of five runs; with the free pair
typed as elliptic, in three; with learned types, in one. For initial states with zero total
momentum the error at $t=40$ (training on $t\le2$) is $2.8\cdot10^{-2}$, against
$5.8\cdot10^{-2}$ and $2.1\cdot10^{-1}$ for step-size-aware $q$/$p$-shear and learned-plane
networks iterated twenty times: a gain of two to eight, much smaller than in one degree of
freedom. For states with non-zero total momentum the center of mass leaves the training
region and no model extrapolates.

Figure~\ref{fig:toda} shows the three positions along one zero-momentum orbit: the
conjugated model with the correct types follows all three particles to $t=40$; with a twist
core the phase drifts; the iterated shear networks lose the solution within a few steps.

\begin{figure}[t]
\centering
\includegraphics[width=\textwidth]{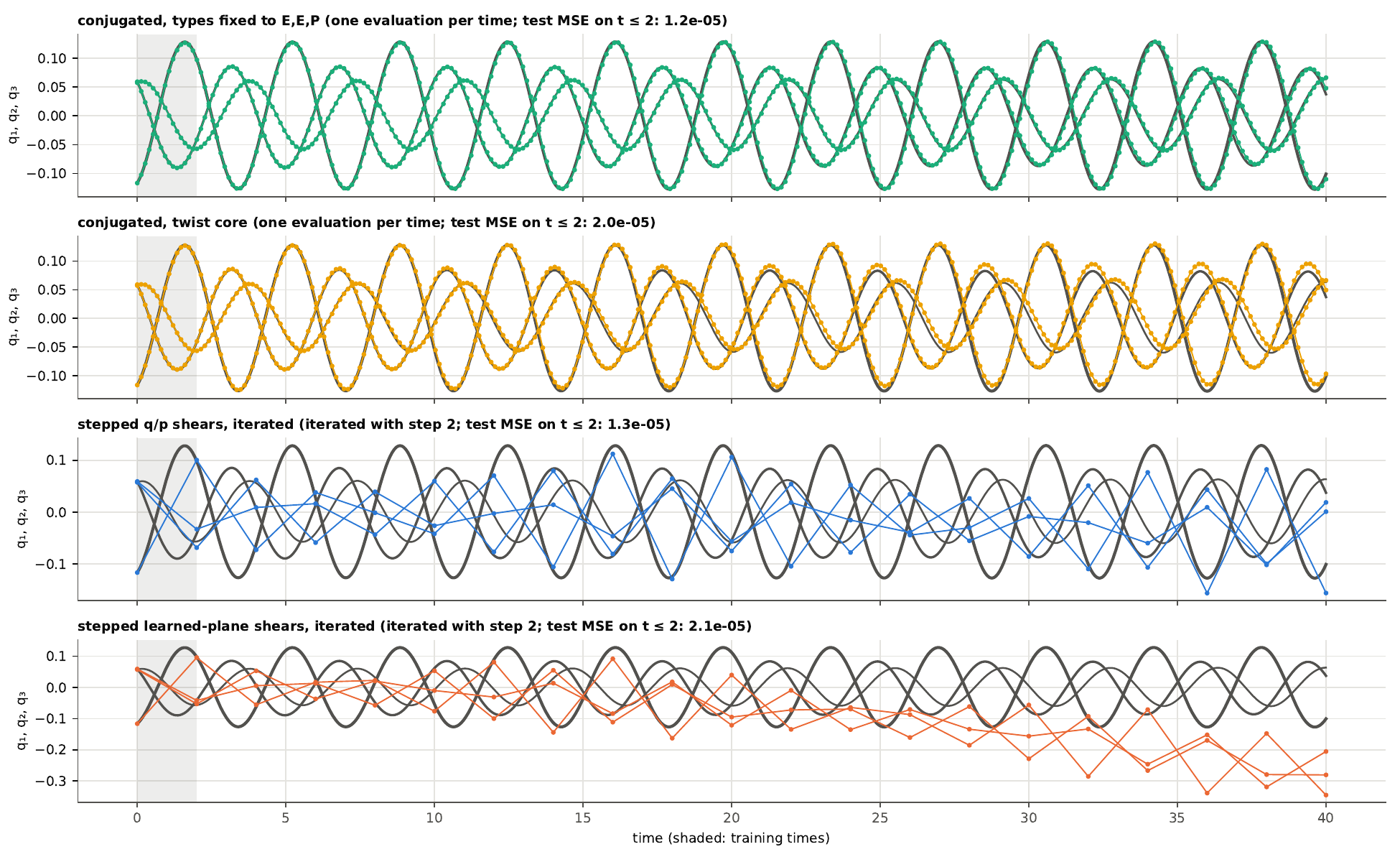}
\caption{Periodic Toda lattice with three particles, zero total momentum: true positions
(gray) and learned solutions, trained on $t\le2$ (shaded).}
\label{fig:toda}
\end{figure}

\subsection{Fixed-time flow maps in one and two degrees of freedom}

Among five-layer models, networks of learned-type layers are the most accurate on the
pendulum, the double well ($4.1\cdot10^{-7}$ against $2.0\cdot10^{-4}$ for $q$/$p$ shears;
Figure~\ref{fig:doublewell}), Tao's system and the coupled Tao system. The stretch layer
alone gives no gain over shears, including on the saddle, where plain shears are the most
accurate at every horizon tried, and on the polar map used in \cite{he2024deep}.
Table~\ref{tab:twodof} compares four fiber layers with eight shear layers at equal parameter
count. The fiber network is the most accurate model on the two non-separable systems outside
the class $V+K+W(q\odot p)$, by a factor of about two over learned-plane shears.

\begin{figure}[t]
\centering
\includegraphics[width=\textwidth]{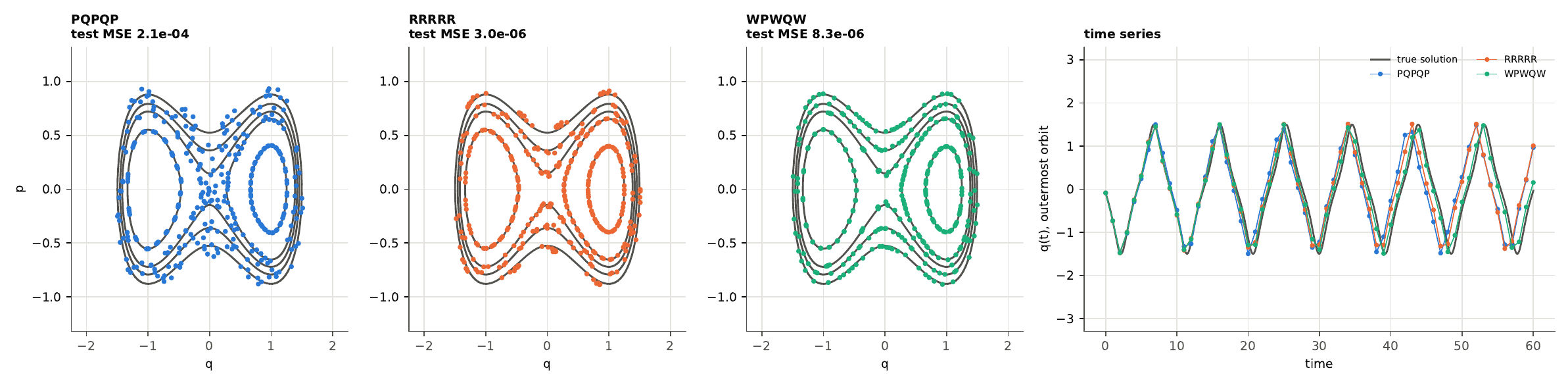}
\caption{Double well: true orbits (gray) and 60 iterates of the learned time-1 map.}
\label{fig:doublewell}
\end{figure}

\paragraph{The polar map.} The map $(q,p)\mapsto(\sqrt{2q}\cos p,\sqrt{2q}\sin p)$ of
\cite{he2024deep} is the transformation from action--angle to Cartesian coordinates. On the
unit square the learned-type networks reach $7\cdot10^{-6}$ to $8\cdot10^{-6}$, plain shears and
the stretch network of the original paper $2.6\cdot10^{-5}$ to $2.7\cdot10^{-5}$. On the annular
domain $[\tfrac12,\tfrac32]\times[0,\tfrac{3\pi}2]$ the cotangent-lift network is best
($2.6\cdot10^{-4}$), and Figure~\ref{fig:polar} shows why: it reproduces the image of a
coordinate grid, whereas the stretch and learned-type networks fold near the two ends of
the annulus. The stretch layer gives no gain over shears on the example it was introduced
with.

\begin{figure}[t]
\centering
\includegraphics[width=\textwidth]{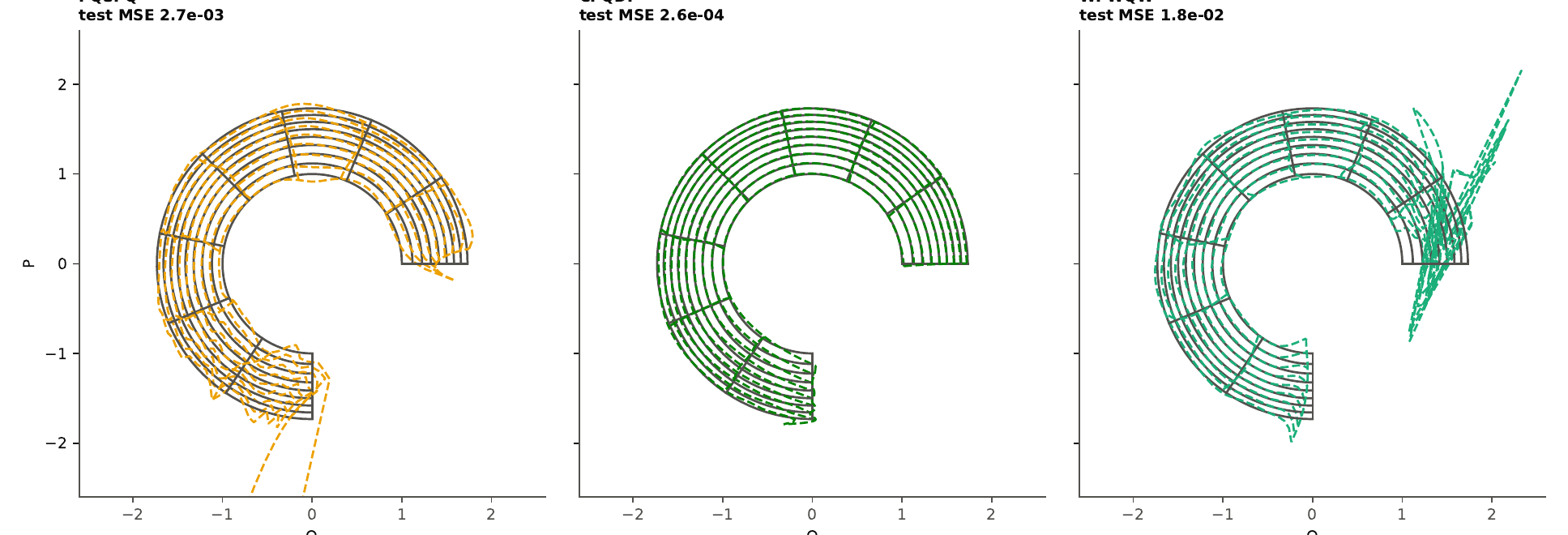}
\caption{Polar map on the annular domain: image of a coordinate grid under the true map
(gray) and under three learned maps (dashed).}
\label{fig:polar}
\end{figure}

\paragraph{Iterated maps in one and two degrees of freedom.} Figures~\ref{fig:pendulum-flow}
and~\ref{fig:charged-flow} iterate the learned time-$T$ maps. On the pendulum all three
model families stay on the true orbits for 60 steps; on the charged particle, a generic
non-separable system, the fiber and learned-plane networks track the solution for 30 steps
with errors a few times smaller than plain shears, in line with the one-step errors of
Table~\ref{tab:twodof}.

\begin{figure}[t]
\centering
\includegraphics[width=\textwidth]{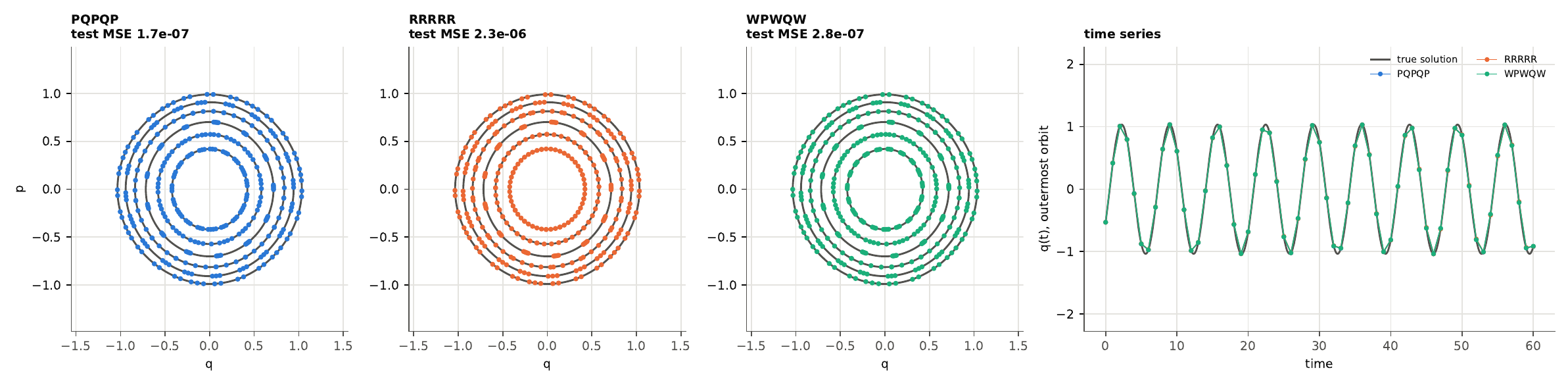}
\caption{Pendulum: true orbits (gray) and 60 iterates of three learned time-1 maps.}
\label{fig:pendulum-flow}
\end{figure}

\begin{figure}[t]
\centering
\includegraphics[width=\textwidth]{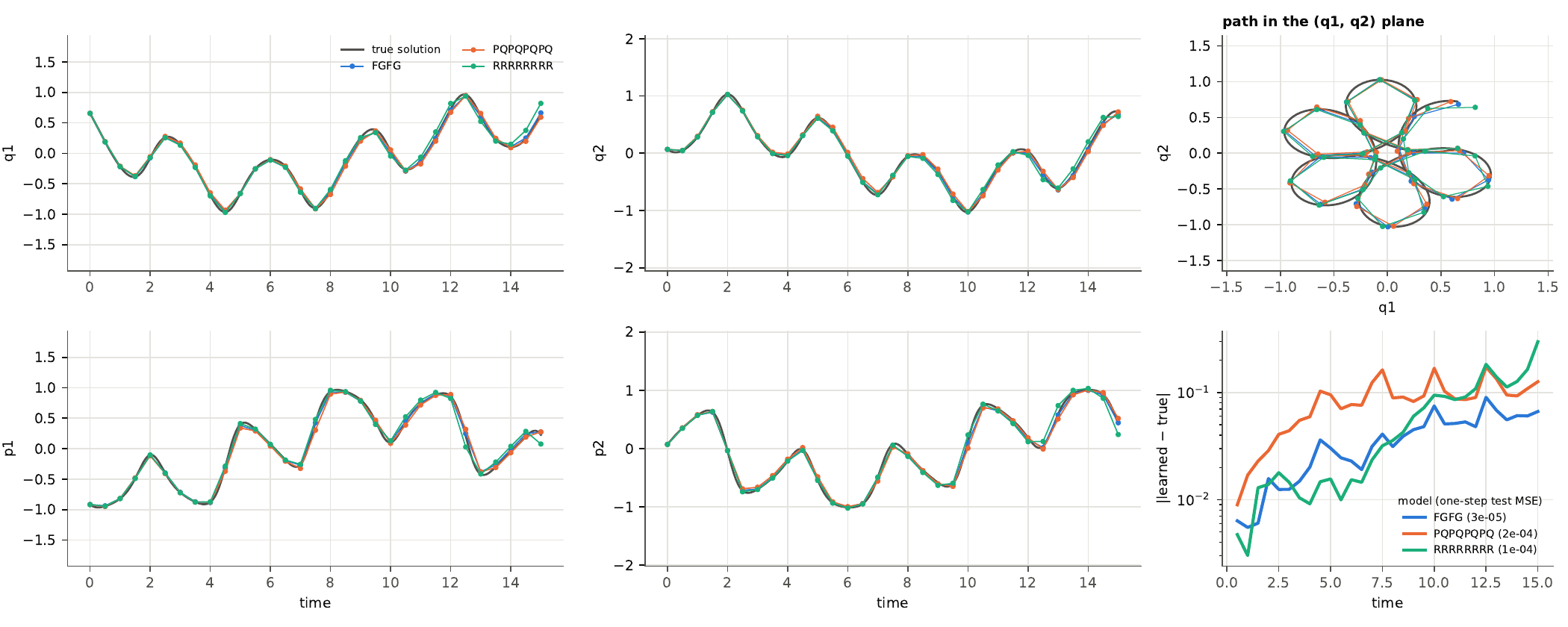}
\caption{Charged particle in a non-uniform magnetic field ($n=2$): true solution (gray) and
30 iterates of the learned time-$0.5$ maps.}
\label{fig:charged-flow}
\end{figure}

\begin{table}[t]
\centering\small
\caption{Two degrees of freedom, fixed-time maps: one-step test MSE, median (range) over five
seeds, about 2300 parameters per model.}
\label{tab:twodof}
\begin{tabular}{lccc}
\toprule
system & \texttt{FGFG} & $q$/$p$ shears $\times8$ & learned-plane $\times8$ \cite{tapley2024symplectic}\\
\midrule
charged particle & $2.0\cdot10^{-5}$ $(0.9$--$17)$ & $2.5\cdot10^{-4}$ & $4.0\cdot10^{-5}$ $(2.8$--$6.9)$\\
double pendulum & $1.7\cdot10^{-5}$ $(1.6$--$1.9)$ & $3.5\cdot10^{-5}$ & $3.0\cdot10^{-5}$ $(2.3$--$12)$\\
coupled Tao & $1.9\cdot10^{-5}$ & $3.6\cdot10^{-5}$ & $1.6\cdot10^{-5}$\\
H\'enon--Heiles & $5.6\cdot10^{-7}$ & $4.7\cdot10^{-7}$ & $8.6\cdot10^{-7}$\\
\bottomrule
\end{tabular}
\end{table}

\subsection{Nonlinear Schr\"odinger lattices}\label{sec:lattice}

The periodic lattice
\[
 H=\sum_i\tfrac12\bigl[(q_{i+1}-q_i)^2+(p_{i+1}-p_i)^2\bigr]+g(q_i^2+p_i^2)
\]
has a linear coupling and an on-site nonlinearity that is a function of the integrals
$q_i^2+p_i^2$ carried by twist and learned-type layers. We learn its time-$0.3$ map from 2000
samples for the cubic case $g(s)=s^2/4$ with $n=6$, $10$, $20$, $40$ sites and for the
saturable case $g(s)=\log(1+s)$ with $n=10$, $20$. Besides alternating the new layers with
shears, we alternate them with linear symplectic maps L. We compare four ways of
parametrizing the potential $\Phi$ of the twist layers:
\begin{itemize}
 \item \texttt{TLTLTLTL}: a general dense network $\Phi(I_1,\dots,I_n)$;
 \item \texttt{ULULULUL}: an on-site potential of learned form, $\Phi=\sum_i\phi(I_i)$ with one
 small network $\phi$ shared by all sites;
 \item \texttt{KLKLKLKL}: the fixed cubic (Kerr) form $\Phi=\sum_i(aI_i+\gamma I_i^2)$ with two
 trained scalars per layer. This is the learned split-step scheme of optical communications
 \cite{hager2018nonlinear,hager2021physics}, where the nonlinear step is fixed and the
 linear filters are trained; here both are trained;
 \item \texttt{WLWLWLWL}: learned-type layers, which are not told the integral.
\end{itemize}
All four are exactly symplectic instances of Lemma~\ref{lem:commuting}.

\begin{table}[t]
\centering\small\setlength{\tabcolsep}{4pt}
\caption{Lattices: one-step test MSE of the time-$0.3$ map, median over five seeds (three for
\texttt{ULULULUL} at 6 and 40 sites and on the saturable lattice, and for the last three
rows). ``Learned-plane'' is the better of our configuration and the author's.
Bottom: Strang splitting with the true Hamiltonian, for orientation.}
\label{tab:lattice}
\begin{tabular}{lcccccc}
\toprule
 & \multicolumn{4}{c}{cubic} & \multicolumn{2}{c}{saturable}\\
model & 6 & 10 & 20 & 40 & 10 & 20\\
\midrule
\texttt{ULULULUL} (site-wise twist) & $\mathbf{8.6\cdot10^{-8}}$ & $\mathbf{1.3\cdot10^{-7}}$ & $\mathbf{1.4\cdot10^{-7}}$ & $\mathbf{2.4\cdot10^{-7}}$ & $\mathbf{1.3\cdot10^{-7}}$ & $\mathbf{2.0\cdot10^{-7}}$\\
\texttt{KLKLKLKL} (fixed cubic form) & $4.3\cdot10^{-7}$ & $5.0\cdot10^{-7}$ & $7.2\cdot10^{-7}$ & $1.2\cdot10^{-6}$ & $1.3\cdot10^{-4}$ & $1.4\cdot10^{-4}$\\
\texttt{TLTLTLTL} (general twist) & $1.1\cdot10^{-6}$ & $2.9\cdot10^{-6}$ & $7.0\cdot10^{-6}$ & $3.5\cdot10^{-5}$ & $6.6\cdot10^{-6}$ & $2.1\cdot10^{-5}$\\
\texttt{TPTQTPTQ} (twist + shears) & $5.6\cdot10^{-6}$ & $1.3\cdot10^{-5}$ & $7.6\cdot10^{-5}$ & $2.6\cdot10^{-4}$ & -- & --\\
\texttt{WLWLWLWL} (learned-type) & $2.2\cdot10^{-5}$ & $2.5\cdot10^{-5}$ & $2.4\cdot10^{-5}$ & $3.2\cdot10^{-5}$ & $1.8\cdot10^{-5}$ & $2.1\cdot10^{-5}$\\
\midrule
$q$/$p$ shears $\times8$ & $1.4\cdot10^{-4}$ & $4.0\cdot10^{-4}$ & $2.4\cdot10^{-3}$ & $7.3\cdot10^{-3}$ & $2.2\cdot10^{-4}$ & $4.6\cdot10^{-4}$\\
learned-plane shears \cite{tapley2024symplectic} & $1.3\cdot10^{-4}$ & $6.1\cdot10^{-4}$ & $6.5\cdot10^{-3}$ & $8.6\cdot10^{-3}$ & $9.5\cdot10^{-5}$ & $5.5\cdot10^{-4}$\\
\texttt{LPLQLPLQ} (shears + linear) & $1.8\cdot10^{-4}$ & $4.8\cdot10^{-4}$ & $9.7\cdot10^{-4}$ & -- & -- & --\\
LA-SympNet \cite{jin2020sympnets} & $9.5\cdot10^{-4}$ & $2.2\cdot10^{-3}$ & $3.5\cdot10^{-3}$ & -- & -- & --\\
\midrule
Strang, true $H$, 1 / 2 / 4 stages & \multicolumn{4}{c}{$4.5\cdot10^{-5}$ / $2.4\cdot10^{-6}$ / $1.4\cdot10^{-7}$} & \multicolumn{2}{c}{--}\\
\bottomrule
\end{tabular}
\end{table}

\begin{figure}[t]
\centering
\includegraphics[width=0.75\textwidth]{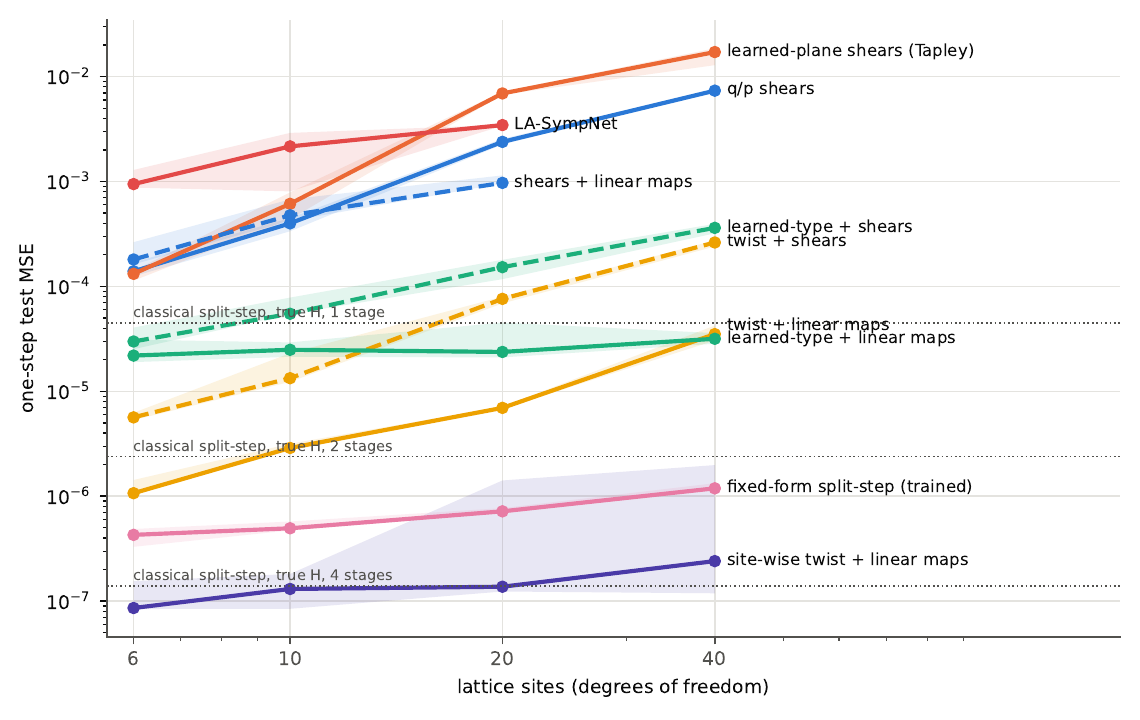}
\caption{Cubic lattice: test error against the number of sites (median and range over
seeds). Dotted: Strang splitting with the true Hamiltonian.}
\label{fig:lattice}
\end{figure}

Table~\ref{tab:lattice} and Figure~\ref{fig:lattice} show the result.

\emph{Shear-based networks.} $q$/$p$ shears, learned-plane shears in either configuration,
shears alternating with linear maps and LA-SympNets all have errors between $10^{-4}$ and
$10^{-2}$, growing with the number of sites. They are less accurate than a single classical
splitting step with the true Hamiltonian. Linear layers alone do not help.

\emph{Twist layers.} Every network with twist or learned-type layers is more accurate than
every shear-based network on every lattice. The general twist network is about $120$,
$140$, $140$ and $210$ times more accurate than the best shear-based model at 6, 10, 20 and
40 sites of the cubic lattice, and $14$ and $22$ times on the saturable lattice. Seed
ranges do not overlap.

\emph{How the potential is parametrized matters most.} With the fixed cubic form, the
network is $3$ to $30$ times more accurate than the general twist network on the cubic
lattice, where its assumption is exactly right, and no better than shear networks on the
saturable lattice, where it is wrong. The site-wise network, which assumes only that the
nonlinearity is on-site and the same at every site, is the most accurate model on both
lattices: $4$ to $5$ times better than the fixed cubic form on the cubic lattice, three
orders of magnitude better on the saturable one, and between $7\cdot10^2$ and $3\cdot10^4$
times better than the best shear-based network. Its error is nearly independent of the number of sites
and comparable to four classical splitting stages with the true Hamiltonian. The comparison
is not symmetric in prior knowledge: the site-wise and fixed-form networks are told that the
nonlinearity is on-site, the general twist network only that it is a function of the
actions, and the shear networks nothing.

With four times the training budget on the cubic 10-site lattice the ranking is unchanged:
$2.4\cdot10^{-8}$ for the fixed cubic form, $4.0\cdot10^{-8}$ for the site-wise network,
$7.9\cdot10^{-7}$ for the general twist network, $6.9\cdot10^{-5}$ for the best learned-plane
configuration, and $2.8\cdot10^{-4}$ and $7.9\cdot10^{-4}$ for G- and LA-SympNets at the
sizes of the three-body example of \cite{jin2020sympnets}. With the longer budget the fixed
cubic form overtakes the site-wise network on the cubic lattice, by a factor of $1.6$.
With the step size as an input, the sum of the layer generators of \texttt{TPTQTPTQ}
reproduces the cubic lattice Hamiltonian to $0.6\%$ ($1.9\%$ with learned-type layers),
against $6$--$7\%$ for the shear networks. Figure~\ref{fig:dnls} shows iterated predictions.

The gain is also one of sample efficiency. At ten sites, with the training set reduced
from 2000 to 250 points, the general twist network reaches $8.2\cdot10^{-6}$ and the
fixed-form network $6.6\cdot10^{-7}$, against $1.1\cdot10^{-2}$ and $3.2\cdot10^{-2}$ for
$q$/$p$ shears and learned-plane shears; with 4000 points the errors are $2.5\cdot10^{-6}$,
$4.9\cdot10^{-7}$, $3.6\cdot10^{-4}$ and $6.4\cdot10^{-4}$ (Figure~\ref{fig:samples}). For all
models the error is flat beyond 1000 points, so the remaining error is not due to lack of
data.

\begin{figure}[t]
\centering
\includegraphics[width=0.6\textwidth]{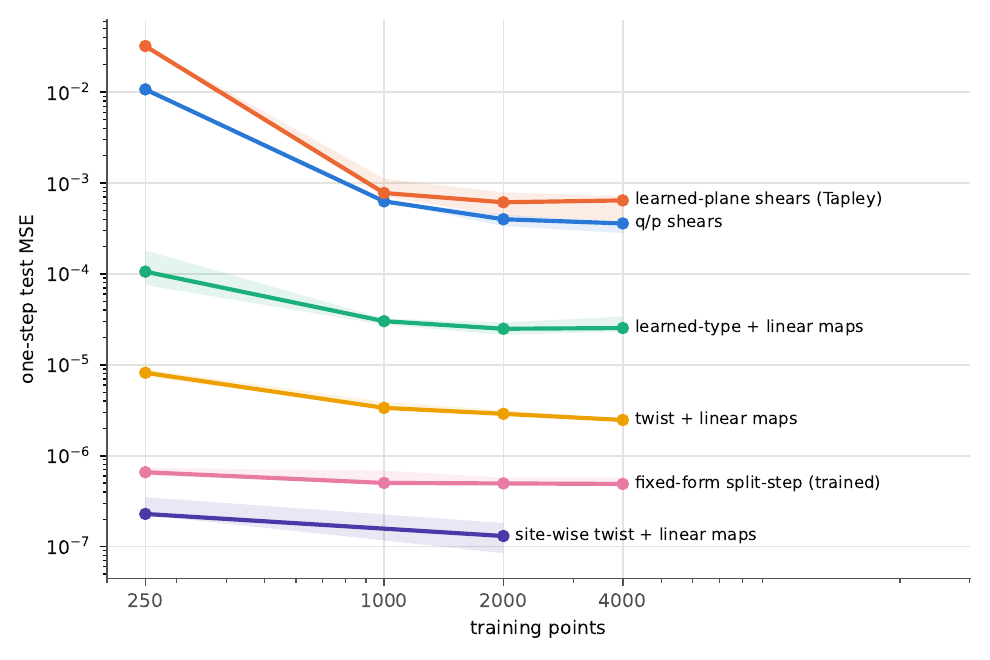}
\caption{Cubic ten-site lattice: test error against the number of training points (median
and range over three to five seeds).}
\label{fig:samples}
\end{figure}

\begin{figure}[t]
\centering
\includegraphics[width=\textwidth]{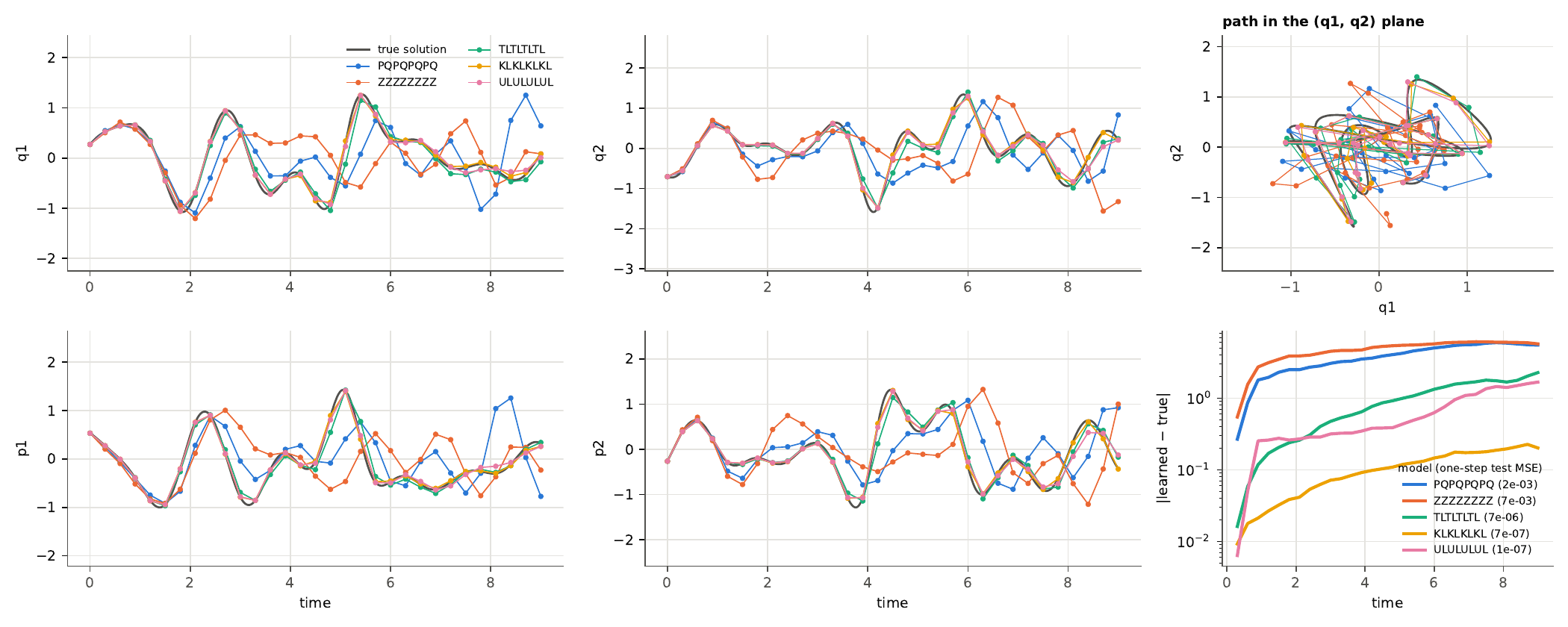}
\caption{Cubic lattice with 20 sites: true solution (gray) and thirty iterates of each
learned time-$0.3$ map from one test initial condition (first two sites shown).}
\label{fig:dnls}
\end{figure}

Figures~\ref{fig:sat20} and~\ref{fig:dnls40} show iterated predictions on the saturable
lattice with 20 sites and the cubic lattice with 40 sites. The ordering of the one-step
errors carries over to the iterated solutions: on the saturable lattice the fixed-form
network, whose nonlinearity is wrong, is no better than the shear networks, while the
site-wise and general twist networks follow the true solution; at 40 sites the fixed-form
and site-wise networks track the solution for thirty steps and the shear networks do not.

\begin{figure}[t]
\centering
\includegraphics[width=\textwidth]{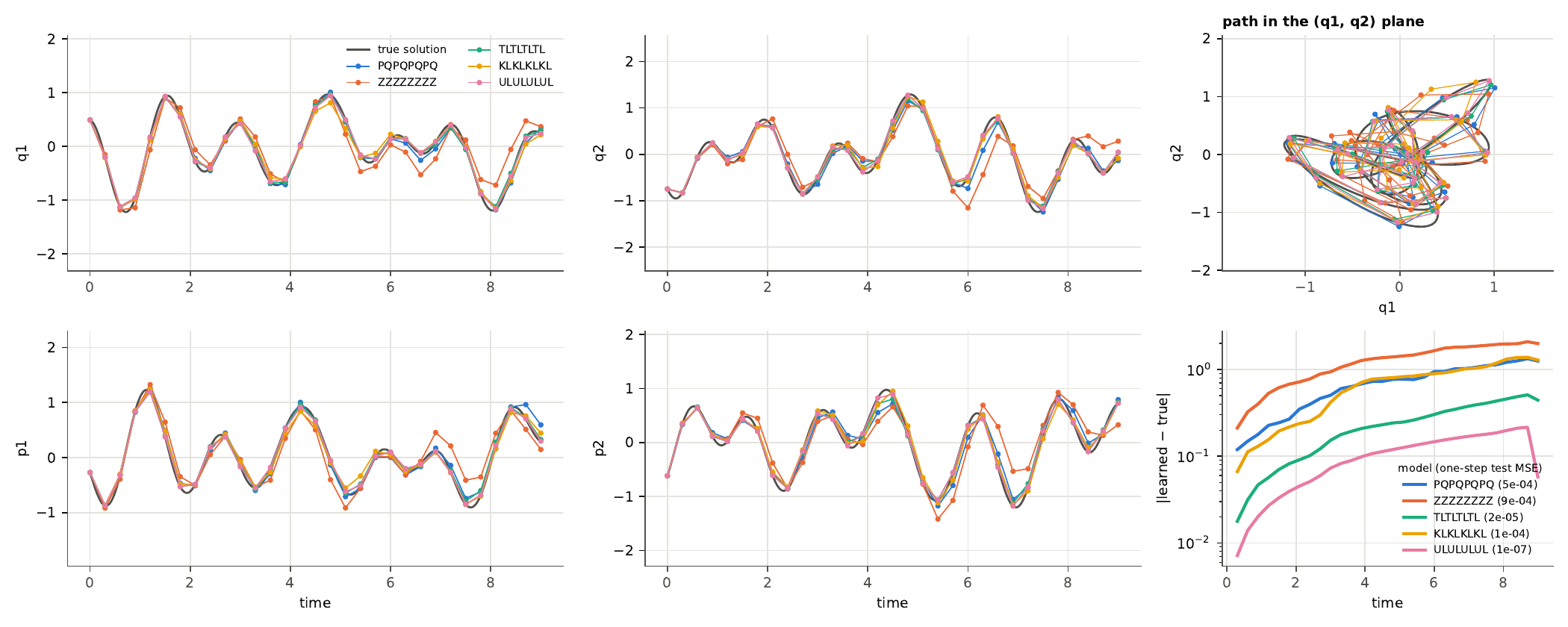}
\caption{Saturable lattice with 20 sites: true solution (gray) and thirty iterates of each
learned time-$0.3$ map from one test initial condition.}
\label{fig:sat20}
\end{figure}

\begin{figure}[t]
\centering
\includegraphics[width=\textwidth]{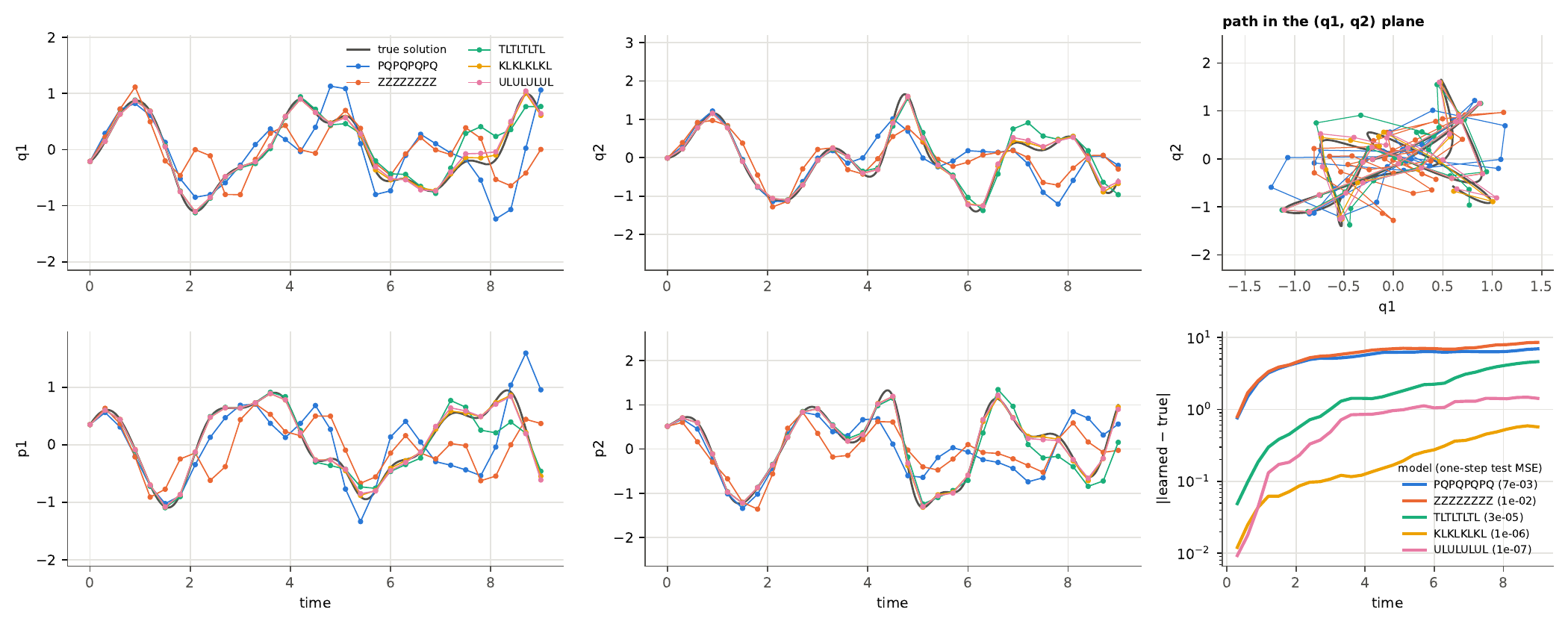}
\caption{Cubic lattice with 40 sites: true solution (gray) and thirty iterates of each
learned time-$0.3$ map from one test initial condition.}
\label{fig:dnls40}
\end{figure}

The lattice conserves the energy and the mass $N=\tfrac12\sum_i(q_i^2+p_i^2)$. None of the
learned maps conserves them exactly, but over 300 iterations at 20 sites
(Figure~\ref{fig:rollout}; medians over five seeds) the site-wise, fixed-form and general
twist networks with linear maps keep both within $0.2$--$0.4\%$ in every seed, and the
twist network with shears within $8\%$ and $5\%$. Learned-plane shears drift by $78\%$ and
$63\%$, and iterates of the $q$/$p$-shear network diverge in every seed. The learned-type
network with linear maps, although accurate over one step, is not reliable over long
rollouts: its energy error ranges from $5\%$ to a factor of several hundred across seeds.
Rollouts leave the box on which the maps were trained; there the fixed-form network, whose
nonlinearity is correct everywhere, has the smallest trajectory error.

\begin{figure}[t]
\centering
\includegraphics[width=\textwidth]{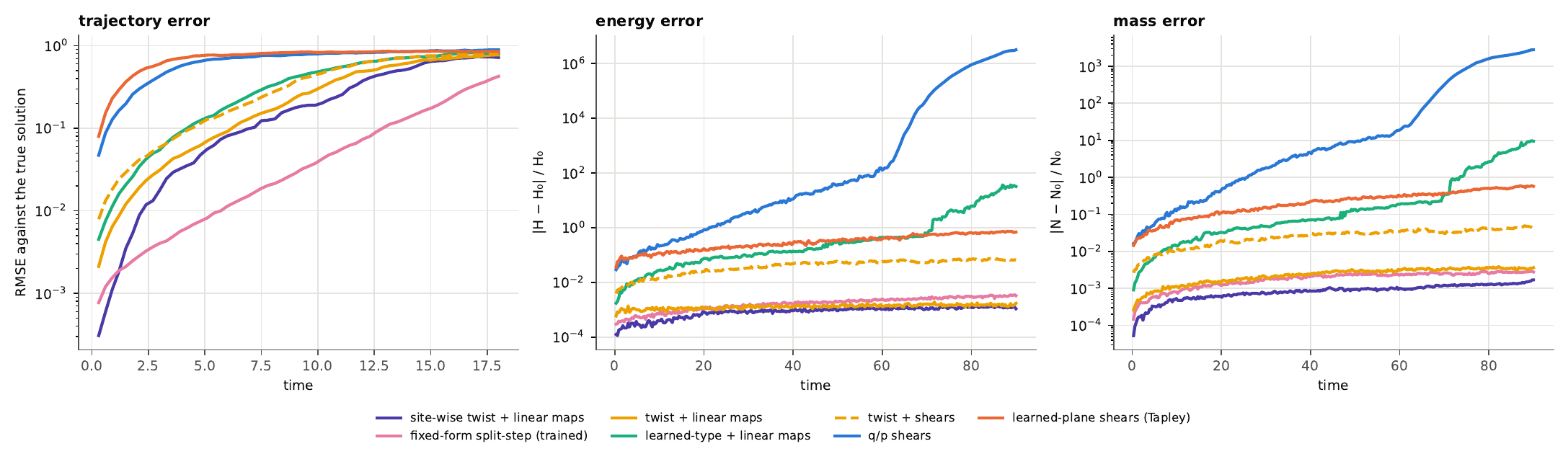}
\caption{Cubic lattice with 20 sites: learned time-$0.3$ maps iterated 300 times; medians
over 50 orbits and five seeds of the error against the true solution and of the relative
errors in energy and mass.}
\label{fig:rollout}
\end{figure}

\subsection{Generic non-separable systems in higher dimension}

On systems without such structure the picture is different (Table~\ref{tab:highdim}). On the
three-link pendulum and on two charged particles the fiber network is the most accurate
model, but only by a factor of about $1.4$ over learned-plane shears; on the separable chain
plain shears are best. With four times the training budget all errors fall by a factor of
three to five and the ranking is unchanged (three-link pendulum: $3.0\cdot10^{-5}$ for the
fiber network against $5.1\cdot10^{-5}$ for learned-plane shears; two charged particles:
$2.1\cdot10^{-5}$ against $3.3\cdot10^{-5}$; two seeds). G-SympNets are one to two orders of
magnitude less accurate than all of these.

\begin{table}[t]
\centering\small
\caption{Generic systems in higher dimension: one-step test MSE, median over five seeds
(three for the FPUT chain).}
\label{tab:highdim}
\begin{tabular}{lcccc}
\toprule
system ($n$) & \texttt{FGFG} & $q$/$p$ shears $\times8$ & learned-plane $\times8$ & \texttt{WPWQWPWQ}\\
\midrule
three-link pendulum (3) & $\mathbf{9.6\cdot10^{-5}}$ & $7.2\cdot10^{-4}$ & $1.4\cdot10^{-4}$ & $8.4\cdot10^{-4}$\\
two charged particles (4) & $\mathbf{1.0\cdot10^{-4}}$ & $2.9\cdot10^{-4}$ & $1.4\cdot10^{-4}$ & $1.6\cdot10^{-4}$\\
FPUT chain (8) & $6.1\cdot10^{-5}$ & $\mathbf{3.5\cdot10^{-5}}$ & $9.8\cdot10^{-5}$ & --\\
\bottomrule
\end{tabular}
\end{table}

\section{Discussion and limitations}

The experiments give a consistent picture. Layers built on \emph{nonlinear} integrals pay off
when the Hamiltonian contains functions of those integrals: products $q_ip_i$ (Tao-type
systems), actions $q_i^2+p_i^2$ (the nonlinear Schr\"odinger lattice), or, after a change of
variables, any integrable system (conjugated flows). There the gains over shear-based
networks are one to four orders of magnitude and persist up to forty degrees of freedom.
Within this class, how much is assumed about the potential decides the accuracy: a general
potential of the integrals, an on-site potential of learned form, and a fixed functional
form are successively more accurate when their assumption holds, and a wrong fixed form is
no better than shears. The learned-type layer, which is not told the integral, is the least
accurate of the new layers on each structured system but the only one that works on all of
them. On generic non-separable systems the picture is different: learned-plane shears
\cite{tapley2024symplectic} are a strong baseline, fiber layers improve on them by a factor
of at most two, and the stretch layer can hurt.

\paragraph{Limitations.}
The comparisons with shear networks on the lattices and chains are not equal
in prior knowledge; they show what the right integral buys, not that the new layers are
better in general. Conjugated flows require integrability and, with a learned-type core,
several restarts or an enumeration of type assignments; beyond one degree of freedom their
long-time advantage was modest. The learned-type network is unstable over long rollouts on
the lattice. The fiber layer has no explicit inverse and nothing constrains $Df$ to stay
invertible, although its smallest singular value stayed above $0.4$ in all our runs.
Widening any of these layers does not help. Most experiments use one training budget, with
the ranking checked at four times that budget on three systems; the largest system has forty
degrees of freedom; and the baselines are re-implementations checked against, but not run
from, the authors' code.

\appendix
\section{Implementation details}\label{app:details}

\paragraph{Potentials.} Unless stated otherwise every scalar potential ($F$, $G$, $\Phi$) is a
dense network with $\tanh$ activation and hidden sizes $(20,10)$ for $n\le2$ and $(40,20)$
for $n\ge3$, with the default initialization of the framework. Gradients of potentials are
taken by automatic differentiation, so layers are symplectic to rounding error for any
smooth potential. The site-wise potential is $\Phi(I)=\sum_i\phi(I_i)$ with $\phi$ a dense
network $1\to(40,20)\to1$; the fixed-form potential is $\sum_i(aI_i+\gamma I_i^2)$ with
$a=0$, $\gamma=0.05$ initially.

\paragraph{Layers.} In the learned-type layer, $M_j$ is initialized with independent standard
normal entries and $c_j$ with standard deviation $0.5$; the functions $C$, $S$, $R$ of
\eqref{eq:quad-layer} are evaluated by their Taylor polynomials of degree three for
$|u|<10^{-3}$. A linear map L is a product of four unit triangular symplectic matrices with
symmetric off-diagonal blocks, initialized at scale $0.01$, followed by a translation. In
the moving-plane layer, $S(a)$ is the symmetrized output of a dense network with one hidden
layer, and the roles of the two groups of coordinates are drawn at random per layer. In the
fiber layers the residual network $B$ has the same hidden sizes as the potentials and its
last layer is scaled by $0.1$ at initialization.

\paragraph{Baselines.} G- and LA-SympNets follow \cite{jin2020sympnets}: sigmoid activation,
parameters initialized at scale $0.01$, biases at zero. H\'enonNet layers are four
compositions of one H\'enon map with a one-hidden-layer $\tanh$ potential
\cite{burby2021henonnets}. Learned-plane shears are run in two configurations: ours
(the potentials above, plane matrices initialized at scale $0.3$, eight layers) and that of
the author's code \cite{tapley2024symplectic} (one hidden layer of width 16--64, all
parameters at scale $0.01$, 8--24 layers); tables report the better one. All baselines were
re-implemented from the papers and, for \cite{jin2020sympnets,tapley2024symplectic}, checked
against the public source code.

\paragraph{Training.} Full-batch Adam for 5000 epochs with learning rate decaying
exponentially from $10^{-2}$ to $10^{-4}$ and gradient-norm clipping at 10, followed by
L-BFGS (strong Wolfe line search, history 50) for at most 500 iterations; the ``long''
budget multiplies both by four. Mean squared error on the outputs, double precision.
Reference solutions use an explicit Runge--Kutta method of order 8 with tolerances
$10^{-12}$. Initial conditions are uniform on a box ($[-1,1]^{2n}$ for the lattices and
chains). Conjugated models use $\psi$ with four or six shear layers.

\paragraph{Long-time metrics.} Where a system has a compact sublevel set of the energy inside
the sampling box, long rollouts and extrapolation in time are evaluated on initial
conditions in that set, so that orbits remain in the region covered by the training data.

\section{Additional results}\label{app:results}

This appendix collects results referred to in the text and the per-seed data behind the
headline tables. Every dot in the per-seed figures is one trained network; bars are medians.

\subsection{Per-seed data for the headline tables}

\begin{figure}[p]
\centering
\includegraphics[width=\textwidth]{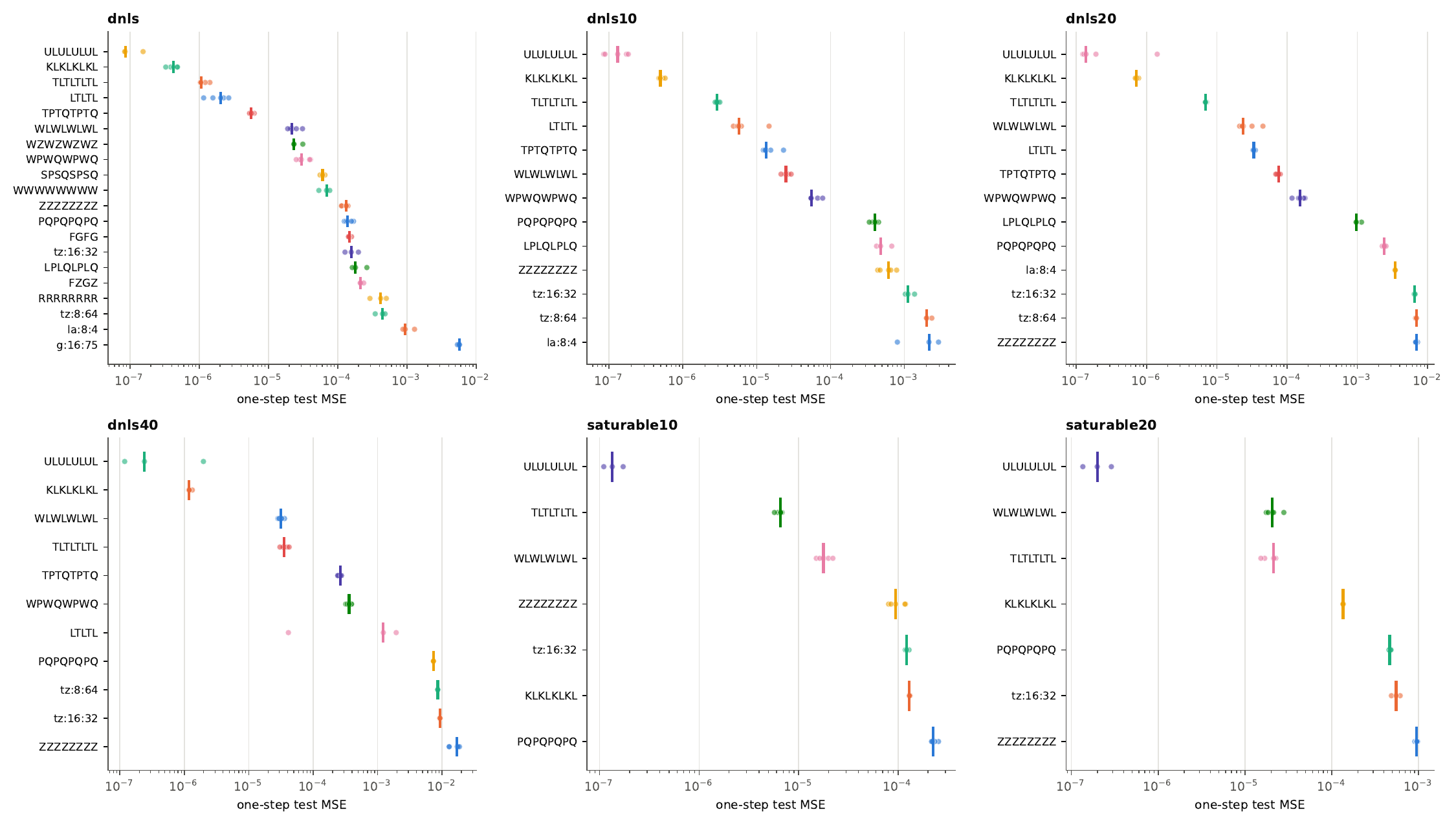}
\caption{Lattices (Table~\ref{tab:lattice}): every seed of every model.}
\label{fig:lattice-dots}
\end{figure}

\begin{figure}[p]
\centering
\includegraphics[width=\textwidth]{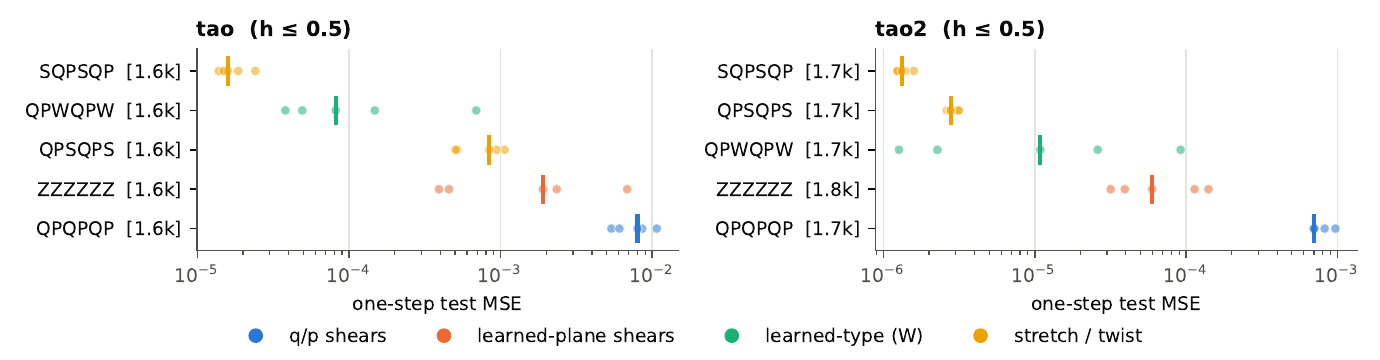}
\caption{Tao and coupled Tao, step size as input (Table~\ref{tab:stepped}): every seed.}
\end{figure}

\begin{figure}[p]
\centering
\includegraphics[width=\textwidth]{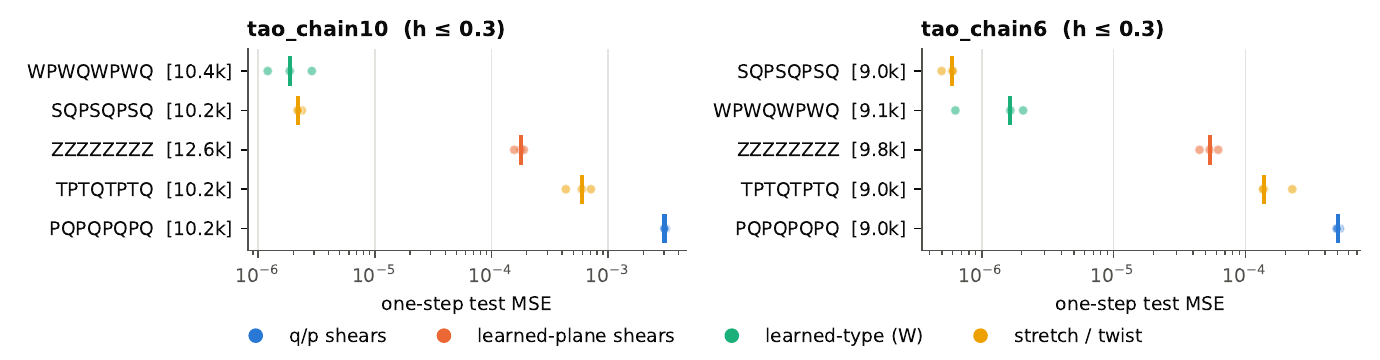}
\caption{Chains of Tao oscillators, step size as input (Table~\ref{tab:stepped}): every seed.
Rows are models; \texttt{TPTQTPTQ} is the twist network, which carries the wrong integral
for this system.}
\end{figure}

\begin{figure}[p]
\centering
\includegraphics[width=\textwidth]{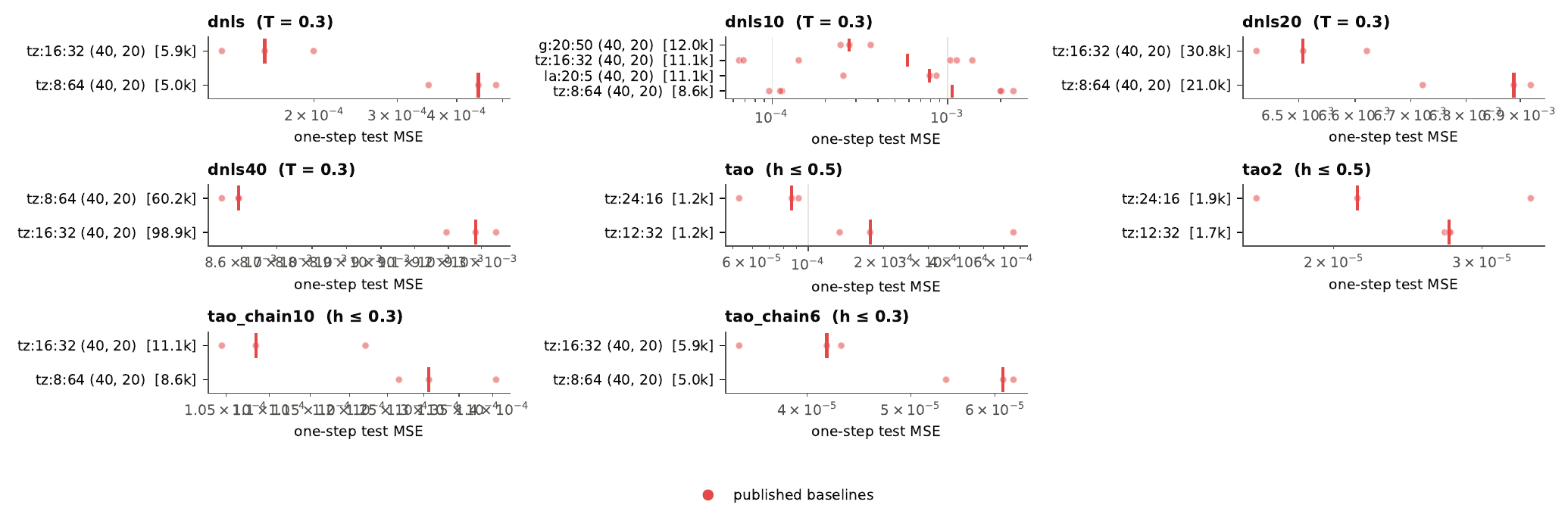}
\caption{Learned-plane shears in the configuration of the author's code
(\texttt{tz:layers:width}), on the lattices and the Tao-type systems, with the standard and
the four-fold training budget.}
\end{figure}

\begin{figure}[p]
\centering
\includegraphics[width=\textwidth]{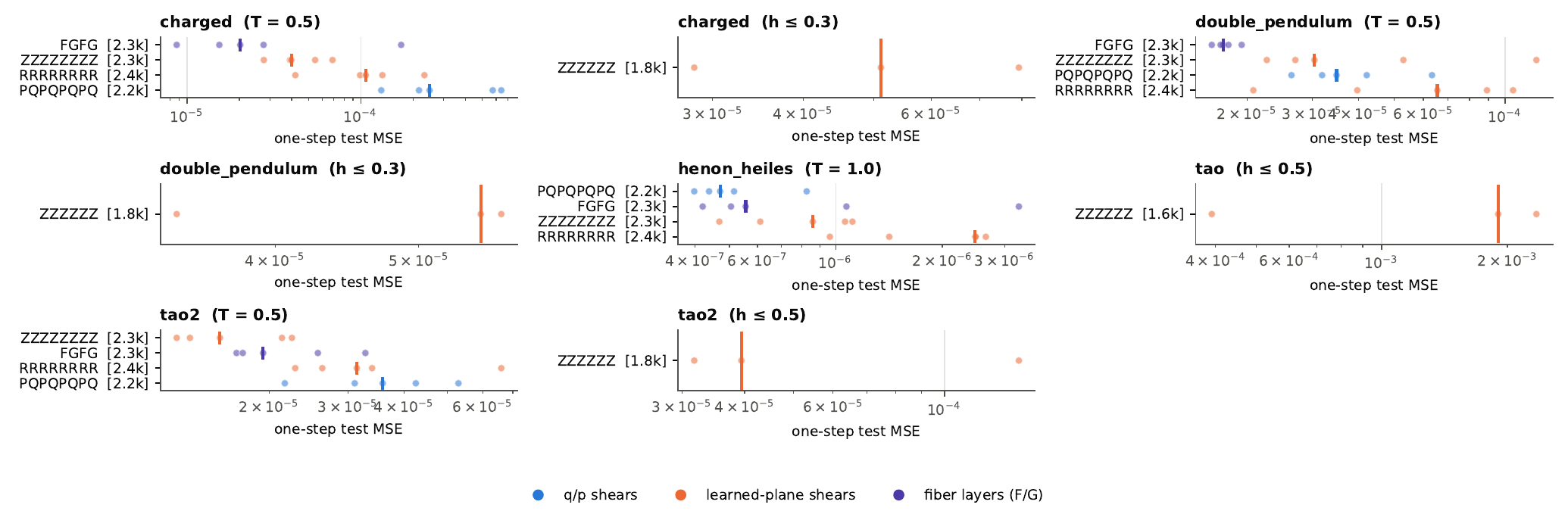}
\caption{Two degrees of freedom, fixed-time maps (Table~\ref{tab:twodof}) and step size as
input: every seed.}
\end{figure}

\subsection{Conjugated flows: reliability, type enumeration and the Toda lattice}

\begin{table}[h]
\centering\small
\caption{Reliability of conjugated flows over ten seeds: number of runs with test MSE below
$10^{-3}$. ``Fixed'' uses the correct type (twist for pendulum and Tao, stretch for the
saddle); ``learned'' uses the learned-type core from the standard or the near-parabolic
initialization of $M_j$; ``$\psi$ near identity'' starts the change of variables close to
the identity.}
\label{tab:reliability}
\begin{tabular}{lcccc}
\toprule
system & fixed core & learned, standard & learned, near-parabolic & learned, $\psi$ near identity\\
\midrule
pendulum & 10 & 4 & 6 & 7\\
Tao & 10 & 5 & 8 & 5\\
saddle & 8 & 9 & 5 & 9\\
\bottomrule
\end{tabular}
\end{table}

\begin{table}[h]
\centering\small
\caption{Saddle--center system: conjugated flows with the core's types fixed in advance,
five seeds each. The correct assignment (elliptic, hyperbolic) is separated from the others
by four orders of magnitude in training loss.}
\label{tab:enum}
\begin{tabular}{lcc}
\toprule
assignment & test MSE per seed & lowest training loss\\
\midrule
E, H (correct) & $5.5\cdot10^{-7}$, $8.2\cdot10^{-7}$, $1.5\cdot10^{-6}$, $2.0\cdot10^{-6}$, $6.2\cdot10^{-6}$ & $1.6\cdot10^{-7}$\\
E, E & $3.8\cdot10^{-3}$ -- $2.5\cdot10^{-2}$ & $1.7\cdot10^{-3}$\\
H, E & $9.9\cdot10^{-3}$ -- $3.1\cdot10^{-2}$ & $4.0\cdot10^{-3}$\\
H, H & $5.3\cdot10^{-3}$ -- $1.3\cdot10^{-2}$ & $1.3\cdot10^{-3}$\\
\bottomrule
\end{tabular}
\end{table}

\begin{table}[h]
\centering\small
\caption{Periodic Toda lattice with three particles, trained on $t\le2$: runs that fit (of
five) and RMSE at $t=10$ and $t=40$ for zero-momentum initial states, medians over the runs
that fit.}
\label{tab:toda}
\begin{tabular}{lccc}
\toprule
model & fits & $t=10$ & $t=40$\\
\midrule
conjugated, types fixed to E, E, P & 5 & $6.9\cdot10^{-3}$ & $2.8\cdot10^{-2}$\\
conjugated, types fixed to E, E, E & 3 & $7.2\cdot10^{-3}$ & $2.8\cdot10^{-2}$\\
conjugated, twist core & 3 & $7.1\cdot10^{-3}$ & $2.6\cdot10^{-2}$\\
conjugated, learned-type core & 1 & $6.8\cdot10^{-3}$ & $2.6\cdot10^{-2}$\\
stepped $q$/$p$ shears $\times8$, iterated & 5 & $1.8\cdot10^{-2}$ & $5.8\cdot10^{-2}$\\
stepped learned-plane shears $\times8$, iterated & 5 & $3.1\cdot10^{-2}$ & $2.1\cdot10^{-1}$\\
\bottomrule
\end{tabular}
\end{table}

\begin{figure}[p]
\centering
\includegraphics[width=0.85\textwidth]{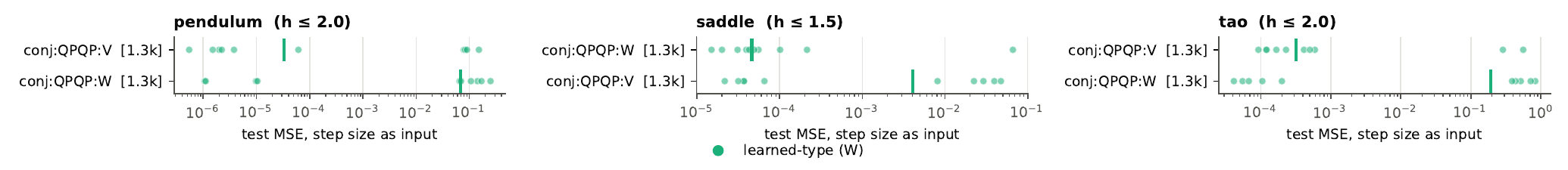}
\caption{Reliability sweep: test error of every seed of the conjugated learned-type model on
the pendulum, Tao's system and the saddle, for two initializations of $M_j$.}
\end{figure}

\begin{figure}[p]
\centering
\includegraphics[width=0.85\textwidth]{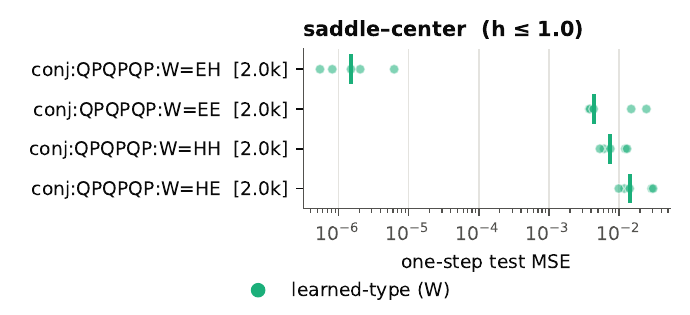}
\caption{Saddle--center system: every seed of the conjugated models with fixed types.}
\end{figure}

\begin{figure}[p]
\centering
\includegraphics[width=0.85\textwidth]{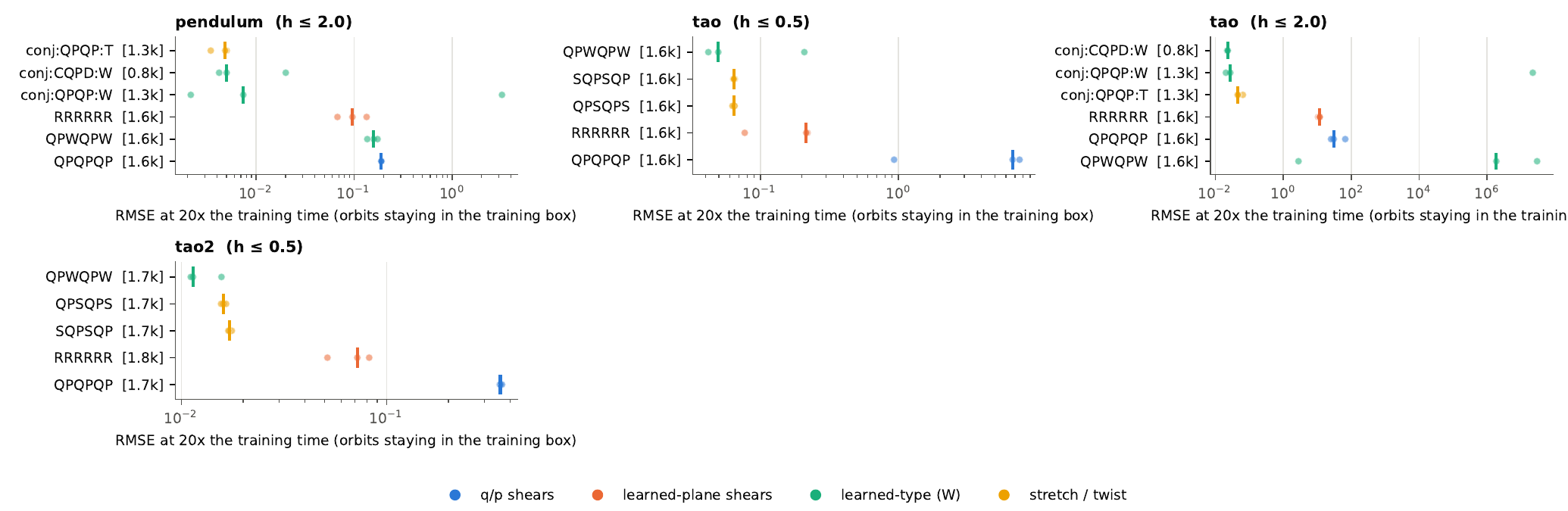}
\caption{Long-time prediction at twenty times the training horizon: conjugated flows (one
evaluation) against stepped networks (twenty steps), for orbits that stay in the training
box.}
\end{figure}

\subsection{Depth study and polar map: numbers}

\begin{table}[h]
\centering\small
\caption{Depth study (Figure~\ref{fig:depth}): median test MSE over two seeds.}
\label{tab:depth}
\begin{tabular}{llccccc}
\toprule
system & family & 2 & 3 & 4 & 6 & 8 layers\\
\midrule
pendulum & $q$/$p$ shears & $3.0\cdot10^{-2}$ & $4.6\cdot10^{-4}$ & $4.2\cdot10^{-7}$ & $2.3\cdot10^{-7}$ & $3.5\cdot10^{-7}$\\
 & learned-plane (ours) & $1.6\cdot10^{-4}$ & $2.3\cdot10^{-6}$ & $1.8\cdot10^{-6}$ & $7.1\cdot10^{-7}$ & $3.8\cdot10^{-7}$\\
 & learned-type & $1.5\cdot10^{-5}$ & $1.1\cdot10^{-6}$ & $4.2\cdot10^{-7}$ & $2.2\cdot10^{-7}$ & $2.5\cdot10^{-7}$\\
Tao, $T=0.5$ & $q$/$p$ shears & $6.6\cdot10^{-2}$ & $3.8\cdot10^{-2}$ & $3.7\cdot10^{-3}$ & $4.7\cdot10^{-6}$ & $7.0\cdot10^{-6}$\\
 & learned-plane (ours) & $2.8\cdot10^{-2}$ & $7.9\cdot10^{-3}$ & $1.4\cdot10^{-4}$ & $5.7\cdot10^{-5}$ & $1.1\cdot10^{-5}$\\
 & learned-type & $8.0\cdot10^{-3}$ & $5.3\cdot10^{-5}$ & $1.9\cdot10^{-5}$ & $1.5\cdot10^{-6}$ & $1.2\cdot10^{-6}$\\
charged particle & $q$/$p$ shears & $1.3\cdot10^{-1}$ & $6.3\cdot10^{-2}$ & $3.6\cdot10^{-3}$ & $3.4\cdot10^{-4}$ & $1.0\cdot10^{-4}$\\
 & learned-plane (ours) & $6.6\cdot10^{-3}$ & $8.8\cdot10^{-4}$ & $6.3\cdot10^{-4}$ & $1.4\cdot10^{-4}$ & $1.0\cdot10^{-4}$\\
 & learned-type & $9.3\cdot10^{-2}$ & $2.0\cdot10^{-3}$ & $4.7\cdot10^{-4}$ & $3.7\cdot10^{-4}$ & $1.0\cdot10^{-4}$\\
\bottomrule
\end{tabular}
\end{table}

\begin{table}[h]
\centering\small
\caption{Polar map of \cite{he2024deep}: test MSE, median of three seeds, five-layer models.}
\label{tab:polar}
\begin{tabular}{lcc}
\toprule
model & unit square & $[\tfrac12,\tfrac32]\times[0,\tfrac{3\pi}2]$\\
\midrule
\texttt{WPWQW} & $7.2\cdot10^{-6}$ & $1.9\cdot10^{-2}$\\
\texttt{PQWPQ} & $8.4\cdot10^{-6}$ & $2.3\cdot10^{-3}$\\
\texttt{CPQDP} (cotangent lifts) & $1.7\cdot10^{-5}$ & $2.6\cdot10^{-4}$\\
learned-plane (ours) & $2.2\cdot10^{-5}$ & $5.3\cdot10^{-4}$\\
\texttt{PQPQP} & $2.6\cdot10^{-5}$ & $2.5\cdot10^{-3}$\\
\texttt{PQSPQ} (network of \cite{he2024deep}) & $2.7\cdot10^{-5}$ & $2.7\cdot10^{-3}$\\
G-SympNet & $1.2\cdot10^{-4}$ & $3.3\cdot10^{-4}$\\
\bottomrule
\end{tabular}
\end{table}

\bibliographystyle{plain}
\bibliography{references}
\end{document}